\newtheorem{thm}{Theorem}[section]%[chapter]
\newtheorem{lemma}{Lemma}[section]%[chapter]
\newtheorem{defin}{Definition}[section]%[chapter]
\newcommand{\numberset}{\mathbb}
\newcommand{\R}{\numberset{R}} 
\newcommand{\nl}[2]{\|#1\|_{L^2(#2)}} % norm L2
\newcommand{\nlstar}[3]{\|#1\|_{L^{2^{#2}}(#3)}} % norm L2*
\newcommand{\nlnab}[2]{\|\nabla #1\|_{{L^2(#2)}^d}} %norm L^2^d nabla
\newcommand{\nh}[2]{\|#1\|_{H^1(#2)}} % norm H^1
\newcommand{\h}[1]{\widehat{#1}}
\newcommand{\pt}{\partial_t}
\newcommand{\p}{\partial}
\newcommand{\ithx}{\int_{0}^{T-h}\int_{\Omega}}
\title{Existence of a global weak solution for a reaction-diffusion problem with membrane conditions}
\author{Giorgia Ciavolella\thanks{Sorbonne Universit{\'e}, CNRS, Universit\'{e} de Paris, Inria, Laboratoire Jacques-Louis Lions UMR7598, 75005 Paris, France. Emails : giorgia.ciavolella@sorbonne-universite.fr, Benoit.Perthame@sorbonne-universite.fr}
	\and
	Beno\^ \i t Perthame\footnotemark[1]
}
\date{\today}
\begin{document} % ~
		
	\maketitle
	
	\begin{abstract} 
		Several problems, issued from physics, biology or the medical science, lead to parabolic equations set in two sub-domains separated by a membrane with selective permeability to specific molecules. The corresponding boundary conditions, describing the flow through the membrane, are compatible with mass conservation and energy dissipation, and are called the Kedem-Katchalsky conditions. Additionally, in these models, written as reaction-diffusion systems, the reaction terms have a quadratic behaviour. 
		
		M. Pierre and his collaborators have developed a complete $L^1$ theory for reaction-diffusion systems with different diffusion. Here, we adapt this theory to the membrane boundary conditions and prove the existence of weak solutions when the initial data has only $L^1$ regularity using the truncation method for the nonlinearities. In particular, we establish several estimates as the $W^{1,1}$ regularity of the solutions. Also, a crucial step is to adapt the fundamental $L^2$ (space, time) integrability lemma to our situation. 
		
	\end{abstract} 
	\vskip .7cm
	
	\noindent{\makebox[1in]\hrulefill}\newline
	2010 \textit{Mathematics Subject Classification.} 35K57, 35D30, 35Q92 % 35K55, 35B25, 76D27, 92C50.
	\newline\textit{Keywords and phrases.} Kedem-Katchalsky conditions; membrane boundary conditions; reaction-diffusion equations; mathematical biology

	\section*{Introduction}
	
	We analyse the existence of a global weak solution for a reaction-diffusion problem  of $m$ species which diffuse through a permeable membrane. This kind of problem is described by the so-called Kedem-Katchalsky conditions \cite{KK} and has been used in mathematical biology recently. They can describe transport of molecules through the cell/nucleus membrane \cite{serafini}, the flux of cancer cells through thin interfaces \cite{giverso} or solutes absorption processes through the arterial wall \cite{quarteroni}.
	
	To describe the model, we consider, as depicted in Fig.~\ref{fig1}, an inner transverse $C^1$ membrane~$\Gamma$ separating a domain~$\Omega$ in two connected sub-domains  $\Omega^1$ and $ \Omega^2 $, 
	\[
	\Omega~=~\Omega^1 \,\cup\, \Omega^2 \subset \R^d, \quad d\geq 2, \qquad \qquad \Gamma=\partial \Omega^1 \cap \partial\Omega^2.
	\] 
	We assume $\Omega^1$ and $\Omega^2$ to be piecewise $C^1$ domains. In order to set boundary conditions, we introduce $\Gamma^1=\p \Omega^1\setminus \Gamma$ and $\Gamma^2=\p\Omega^2\setminus \Gamma$. We assume that  $\Gamma^1$ and $\Gamma^2$ are non-empty. We could also consider a different geometry such that $\Omega^1$ includes $\Omega^2$ and the membrane becomes the boundary of the inner domain (see for example \cite{BCF,LiWang,wang}). In contrast, the biological situation that we  analyse is presented in Fig.~\ref{fig1} and that is why we leave open the problem with an inner domain.
	\captionsetup[figure]{labelfont=bf,textfont={it}}
	\begin{figure}[H] 	
		\begin{center}
			\includegraphics[scale=0.17]{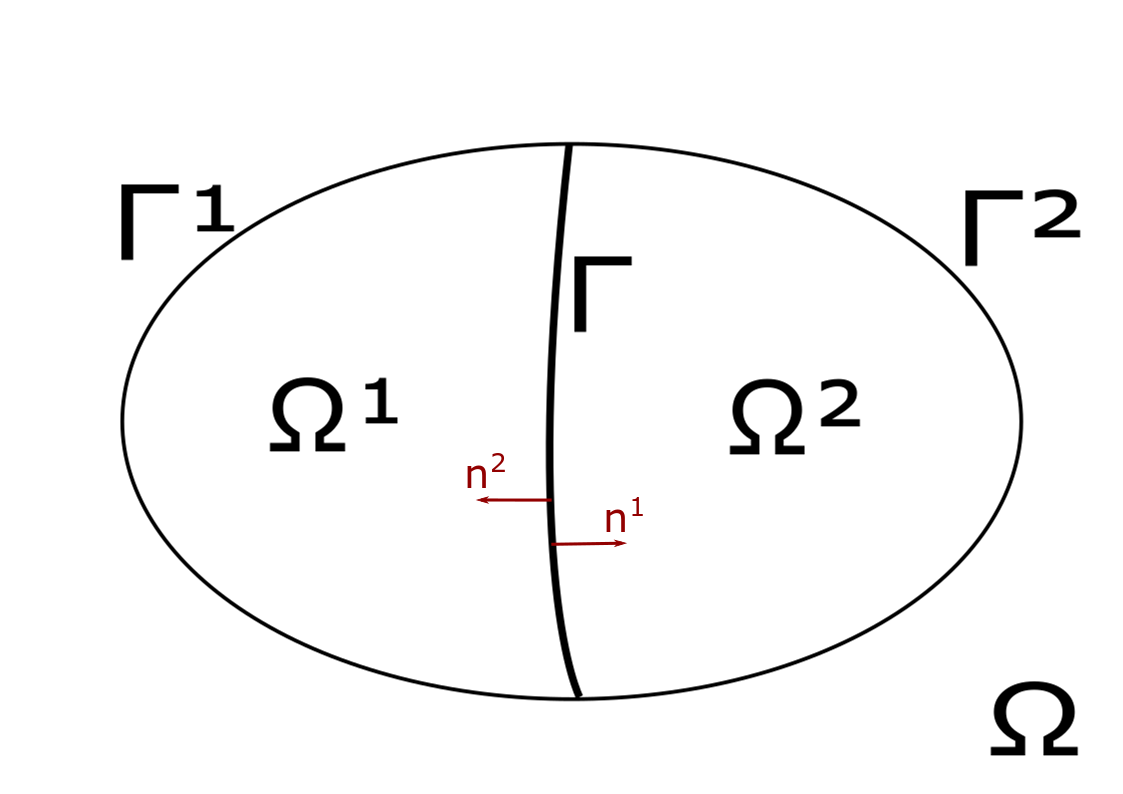}
			\caption{Example of spatial domain $\Omega $ with an inner transverse membrane $\Gamma$ which decomposes $\Omega$ in open sets $\Omega^1$ and $\Omega^2$. The figure also shows the outward normals to the membrane.}
			\label{fig1}
		\end{center}
	\end{figure}

	Ignoring a possible drift, the diffusion through the membrane is described by the  system, for species $i=1,..,m$,
	\begin{equation}\label{model}
	\left\{
	\begin{array}{ll}
	\partial_t u_i-D_i\Delta u_i=f_i(u_1,...,u_m),  \qquad& \mbox{in}\; Q_T:=(0,T)\times \Omega,\\[1ex]
	u_i=0,& \mbox{in}\; \Sigma_T:=(0,T)\times(\Gamma^1\cup\Gamma^2),\\[1ex]
	\partial_{\boldsymbol{n}^1} u^1_i=\partial_{\boldsymbol{n}^1} u^2_i=k_i(u_i^2-u_i^1), & \mbox{in}\; \Sigma_{T,\Gamma}:=(0,T)\times \Gamma,\\[1ex]
	u_i(0,x)=u_{0,i}(x) \geq 0, & \mbox{in}\; \Omega,
	\end{array}
	\right.
	\end{equation}
	in which $\;D_i $ and $k_i$ are positive constants and $\boldsymbol{n}^\lambda$ is the outward normal of the domain $\Omega^\lambda$ for $\lambda=1,2$ such that $\boldsymbol{n}^2=-\boldsymbol{n}^1$. In particular, we use the notation $\partial_{\boldsymbol{n}^1} u^\lambda_i = \nabla u^\lambda_i \cdot \boldsymbol{n}^1$.
	We denote each species density for $i=1,...,m$ with 
	$$
	u_i=\left\{\begin{array}{ll}
	u^1_i,& \mbox{ in } \Omega^1,   \\[1ex]
	u^2_i,& \mbox{ in } \Omega^2, 
	\end{array}\right.
	$$
	since each one lives in both sub-domains $\Omega^\lambda$, for $\lambda=1,2$. There is a jump of $u_i$, $i=1,...,m$ across the membrane $\Gamma$ that we denote by $(u_i^2-u_i^1)=:[u_i]$. More precisely, for $x\in\Gamma$ and for all $i=1,...,m$, we define the trace in Sobolev sense
	    $$
		u_i^1(x) = \lim_{h\rightarrow 0^{-}} u_i(x+h\;\boldsymbol{n}^1(x)), \qquad u_i^2(x) = \lim_{h\rightarrow 0^{-}} u_i(x+h\;\boldsymbol{n}^2(x)).
		$$

	The interest of this system stems from the boundary conditions. In fact, besides standard Dirichlet boundary condition on $\Gamma^\lambda,$ for $\lambda=1,2$, we have used the Kedem-Katchalsky membrane conditions~\cite{KK} on~$\Gamma$. These conditions are made up by two principles: the conservation of mass, which brings to flux continuity, and the dissipation principle such that the $L^2$-norm of the solution is decreasing in time. This last property gives us that  the flux is proportional to  the jump of the density through the membrane with proportionality coefficient $k_i$,  the membrane permeability constant. 
	These Kedem-Katchalsky interface conditions were introduced in $1961$ in \cite{KK} in a thermodynamic context and they were applied to biological problems only later. 
	In $2002$, Quarteroni \& all. \cite{quarteroni} used these interface conditions in the study of the dynamics of the solute in the vessel and in the arterial wall.
	In $2006$, Calabrò and Zunino \cite{calabro} applied their theoretical results on elliptic partial differential equations to the study of the behavior of a biological model for the transfer of chemicals through thin biological membranes.
	In $2007$, Serafini, in her PhD thesis \cite{serafini}, studied a model of the intracellular signal transduction processes in which molecules freely diffuse and the membrane transport events are allowed.
	In $2010$, Cangiani and Natalini \cite{cangiani} considered models of nuclear transport of molecules such as proteins in living cells taking into account the active transport of molecules along the cytoplasmic microtubules.
	We also find Kedem-Katchalsky conditions in recent works studying tumor invasion such as in the pressure equation in Gallinato \& all. (\cite{gallinato}, $2017$) or in the tumor cell density's equation in Chaplain \& all. (\cite{giverso}, $2019$).
	In \cite{wang} ($2019$), Li \& all. proposed a rigorous derivation of bulk surface models which describe cell polarization and cell division including also transmission conditions.  Let us also mention  an example of transmission condition in electrochemistry: Bathory \& all. (\cite{bulicek}, $2019$) proposed a problem frequently used when modelling the transfer of ions through the interface between two different materials.
	\\
	
	For the applications we have in mind, System~\eqref{model} has mass control, membrane conditions are conservative, and we are interested in developing a theory of weak solutions based on this $L^1$ bound even if the reaction terms are, for instance, quadratic. For usual reaction-diffusion systems, such a theory has been developed in a series of papers initiated by M. Pierre and developed later by several authors. In particular, we extend, to the case of membrane conditions, the method proposed by M. Pierre in \cite{baras, bothe, pierre} and extended by E.-H. Laamri and M. Pierre \cite{lpi}, E.-H. Laamri and B. Perthame \cite{lp}. This method develops a theory to treat high order nonlinearities and low regularity initial data compatible with the natural $L^1$ regularity of solutions.	
	Moreover, we show that for all $i=1,...,m$, $\lambda=1,2$,  $u_i^\lambda\in W^{1,1}(\Omega^\lambda)$ (and even better), but it does not have $L^1$ derivatives in the whole $\Omega$.
	In any case, since $u_i$, $i=1,...,m$ is a Sobolev function in $\Omega^1$ and $\Omega^2$, the trace makes sense  in $\partial \Omega$ and thus the definition of the jumps $[u_i]$, $i=1,...,m$ is meaningful. Finally, we define $\boldsymbol{u}=(u_1,...,u_m)$ the vector solution which is characterized by nonnegative components and, as we will see later on, they are naturally $L^1$ functions but not $L^2$. One of the difficulties of a membrane problem is to derive an  $L^2(Q_T)$ estimate.
	\\ 
	
	In this work, we prove analytical results concerning existence of solutions  and regularity of solutions in the case of the reaction-diffusion systems with  Kedem-Katchalsky conditions~\eqref{model}.	
	The paper is composed of two sections. In Section~\ref{hp}, we introduce the assumptions and our main result about global existence of a weak solution for the Problem~\eqref{model} with related lemmas. We also present a specific example in order to give a more concrete idea of the type of systems of interest for us. In Section~\ref{proof}, we prove this result introducing the approximation model of \eqref{model} (Subsection~\ref{regpb}), proving and applying an a priori $L^2$ estimate on the solution (Subsection~\ref{step2}), proving a theorem about the existence of a supersolution of~\eqref{model} (Subsection~\ref{step3}) and a second one on the existence of a solution (Subsection~\ref{step4}).	
	At the end of this work, the reader can find three Appendices. 
	Appendix~\ref{appendix_reg} and Appendix~\ref{appendix_compact} contain the proof of a regularity and compactness lemma useful in the third step of the proof of our main result. Appendix~\ref{appendix_inequal} provides Sobolev and Poincaré embeddings in the case of membrane conditions and, in general, of non-uniform zero boundary conditions.
	
	%----------------------------------------------------------
	\section{Assumptions and main results}
	\label{hp}
	%----------------------------------------------------------
	
	\subsection{Assumptions}
	
	We gather several assumptions on the reaction term ${\bf f}({\bf u})= (f_1({\bf u}),...,f_m({\bf u}))$ that are used separately throughout the paper. With some constants $C, C_M$ and $M>0$, we assume that for all $ i=1,...,m$  and for all~$\boldsymbol{u}=~(u_1,...,u_m) \in [0,+\infty)^m$,
	%and for all $\boldsymbol{v}=(v_1,...,v_m)\in [0,+\infty)^m,$
	\begin{eqnarray}
	&& |f_i(\boldsymbol{u})|\leq C\Big(1+\sum_{j=1}^{m} u_j^2\Big),   \qquad  \qquad  \qquad   \quad \mbox{(sub-quadratic growth)}, \label{hpF0}
	\\
	&&\sum_{j=1}^{m} f_j(\boldsymbol{u})\leq C\Big(1+\sum_{j=1}^{m} u_j\Big),  \qquad  \qquad  \qquad   \qquad \mbox{(mass control)},  \label{hpF1}
	\\%[1ex]
	&&f_i(u_1,...,u_{i-1},0,u_{i+1},...,u_m)\geq 0, \qquad \qquad \;  \mbox{(quasi-positivity)},   \label{hpF2}
	\\[1ex]
	&&|f_i(\boldsymbol{u})-f_i(\boldsymbol{v})|\leq C_M\;\sum_{j=1}^m|u_j-v_j |, \qquad \qquad \; \forall \boldsymbol{u}, \boldsymbol{v} \in [0,M]^m.
	\label{hpF4}
	\end{eqnarray}
	Thanks to assumption \eqref{hpF2}, solutions $u_i$ are nonnegative, and  \eqref{hpF1} provides us with mass control since the total integral of the solution is bounded with exponential growth in time. 
	\\
	
	We do not consider that the $f_i$'s depend on $(x,t)\in Q_T$, but we could extend these assumptions also to that case. We rather give an example modeling intracellular transport phenomena~\cite{cangiani, dimitrio, serafini} in order to understand the class of systems that we have in mind.
	Molecule trafficking across the nuclear envelope has been studied using reaction-diffusion equations with Kedem-Katchalsky conditions. Small molecules can pass through  nuclear pore complexes (NPCs). The translocation of larger molecules is allowed by a system for active transport across the NPCs. The cargo protein binds to a nucleocytoplasmic transport receptor known as importin, which mediates the transport throught the nuclear envelope. The energy needed is provided by the Ran complex. In order to reproduce this intracellular dynamics, Cangiani and Natalini proposed a model in~\cite{cangiani}. We denote by $\Omega^n$ and $\Omega^c$ respectively the nuclear and the cytoplasmic compartment with $\Gamma^{nc}=\partial\Omega^n$ the interface between them. In each compartment, we can write a system of coupled reaction-diffusion equations of type 
	\begin{equation}\label{ex01}
	\left\{
	\begin{array}{ll}
	\partial_t R_t=d_r\Delta R_t + f_{rt}(R_t, T, T_r),\\[1ex]
	\partial_t R_d=d_r\Delta R_d + f_{rd}(R_t),\\[1ex]
	\partial_t T_r=d_{tr}\Delta T_r + f_{tr}(R_t, T, T_r),\\[1ex]
	\partial_t C=d_c\Delta C + f_c(C,T),\\[1ex]
	\partial_t T=d_t\Delta T + f_{t}(R_t, T, T_r, C),\\[1ex]
	\partial_t T_c=d_{tc}\Delta T_c + f_{tc}(C, T).
	\end{array}
	\right.
	\end{equation}
	The two systems are coupled through Neumann homogeneous boundary conditions and Kedem-Katchalsky transmission conditions.  Reactions have at most quadratic growth and they satisfy hypothesis~\eqref{hpF0}--\eqref{hpF4}. 
	This is only an example of a biological system satisfying our assumptions. Its relevance will bring us to develop numerical results aiming to study biological phenomena fitting with the theory presented in this paper.

	\subsection{Main result}
	
	The aim is to prove global existence when the $f_i$'s are at most quadratic and for a membrane problem as \eqref{model}. As mentioned before, we follow the literature concerning existence results for reaction-diffusion systems by M.~Pierre~\cite{baras, bothe, pierre}, by E.-H. Laamri and M.~Pierre~\cite{lpi} and by E.-H.~Laamri and B.~Perthame~\cite{lp}. 
	A local result in the case of membrane conditions is available but taking into account local Lipschitz reaction terms with $u_0\in H^s$, for~$s>\frac{d}{2}$ (e.g.~\cite{serafini}).
	\\
	
	Our main contribution is the following global existence theorem with initial data of low regularity and reaction terms at most quadratic.
	We first enunciate some definitions and introduce the appropriate test functions space for our problem. We recall that
	$$Q_T =(0,T)\times \Omega, \quad\Sigma_T =(0,T)\times(\Gamma^1\cup\Gamma^2),\quad\Sigma_{T,\Gamma}=(0,T)\times \Gamma.$$
	\begin{defin}
			For $ i=1,...,m,$ we define the space of  test functions 
			\[ \begin{array}{rl}
			\mathcal{D}_i:=\big\{&(\psi^1,\psi^2)\in C^\infty([0,T]\times \overline{\Omega^1})\times C^\infty([0,T]\times \overline{\Omega^2}),	
			\\[5pt]&\psi\geq 0,\; \psi(\cdot,T)=0,\; \psi=0 \mbox{ in } \Sigma_T, \; 	
			\partial_{\boldsymbol{n}^1}\psi^1=\partial_{\boldsymbol{n}^1}\psi^2=k_i(\psi^2-\psi^1) \mbox{ in } [0,T] \times \Gamma \big \},
			\end{array}\]
			where $
			\psi=\left\{\begin{array}{ll}
			\psi^1,& \mbox{ in } \Omega^1,   \\
			\psi^2,& \mbox{ in } \Omega^2. 
			\end{array}\right.
			$
	\end{defin} 
	We investigate the existence of a global weak solution of System \eqref{model} defined by duality as
	\begin{defin}\label{defweaksol}
			We define a weak solution of System \eqref{model} as a function $\boldsymbol{u}=(u_1,...,u_m)$ such that for all $T>0$ and $ i=1,...,m$, $u_i \in L^1(Q_T) $,  $f_i(\boldsymbol{u})\in L^1(Q_T)$ and for $\psi\in \mathcal{D}_i$, it holds
			\begin{equation}\label{weaksol} 
			-\int_{\Omega} \psi(0,x)u_{0,i}+\int_{Q_T} u_i(-\pt \psi -D_i \Delta \psi)= \int_{Q_T} \psi f_i. 
			\end{equation}
	\end{defin}
	
	We consider the space  ${\bf H^1}$ and its dual as in Definitions~\ref{defh1} and~\ref{defh1star}.
		\begin{thm} [Existence and regularity] \label{thmprinc}
			Assume \eqref{hpF0}-\eqref{hpF4} and that $k_1=...=k_m.$
			Then, for all $\boldsymbol{u}_0=(u_{0,1},...,u_{0,m}), $ such that $ \boldsymbol{u}_0\in (L^1(\Omega)^+ \cap ({\bf H^1})^*)^m$, System \eqref{model} has a nonnegative global weak solution in the sense of Definition~\ref{defweaksol} which satisfies for all $T>0$ and $i=1,...,m$,
			\begin{eqnarray}
			&u_i \in L^2(Q_T) \quad \mbox{ and } \quad (1+|u_i|)^\alpha \in L^2\big(0,T; H^1(\Omega)\big), \quad &\forall \alpha \in \left[0,\frac{1}{2}\right), \label{th1thm1}\\
			&u_i \in L^\beta\big(0,T;W^{1,\beta}(\Omega)\big)\quad \mbox{ and } \quad u_i \in L^\beta\big(0,T;L^\beta(\Gamma)\big),\quad &\forall \beta\in \left[1,\frac{d}{d-1}\right).	
			\end{eqnarray}
		\end{thm}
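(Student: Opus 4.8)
The plan is to follow Pierre's $L^1$--duality method adapted to the membrane setting, through the four-step scheme announced in the introduction: regularize, derive a priori estimates uniform in the regularization parameter, construct a supersolution that tames the quadratic reaction, and pass to the limit. First I would replace each $f_i$ by a bounded globally Lipschitz truncation $f_i^n$ preserving quasi-positivity \eqref{hpF2} and mass control \eqref{hpF1}, and each $u_{0,i}$ by smooth nonnegative data $u_{0,i}^n$ converging to $u_{0,i}$ in $L^1(\Omega)\cap({\bf H^1})^*$. The truncated system admits a classical solution $\mathbf{u}^n=(u_1^n,\dots,u_m^n)$ by the linear membrane theory together with a fixed-point/continuation argument, and each $u_i^n\ge0$ by \eqref{hpF2}. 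Summing the $m$ equations, integrating over $\Omega$, and using that the membrane fluxes are conservative together with \eqref{hpF1} yields $\frac{d}{dt}\sum_i\int_\Omega u_i^n\le C\big(1+\sum_i\int_\Omega u_i^n\big)$, hence a uniform bound for $\sum_i u_i^n$ in $L^\infty(0,T;L^1(\Omega))$.

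The heart of the argument, and what I expect to be the main obstacle, is the $L^2(Q_T)$ estimate, for which the hypothesis $k_1=\dots=k_m=:k$ is essential. Writing $S^n=\sum_i u_i^n$ and the averaged diffusion $A^n=(\sum_i D_i u_i^n)/S^n\in[\min_i D_i,\max_i D_i]$, the sum solves $\partial_t S^n-\Delta(A^n S^n)=\sum_i f_i^n(\mathbf{u}^n)\le C(1+S^n)$, and, precisely because all permeabilities coincide, $S^n$ inherits the scalar Kedem--Katchalsky condition $\partial_{\boldsymbol{n}^1}S^n=k\,[S^n]$ on $\Gamma$. I would then run the duality argument: for arbitrary $0\le\Theta\in L^2(Q_T)$, solve the backward problem $-\partial_t\phi-A^n\Delta\phi=\Theta$, $\phi(\cdot,T)=0$, with the matching membrane condition for $\phi$, and estimate $\int_{Q_T}S^n\Theta$ by integration by parts. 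The difficulty is to control the boundary contributions on $\Gamma$: the equal-$k$ structure makes the membrane terms coming from $S^n$ and from $\phi$ combine with a favourable sign, and, together with the $({\bf H^1})^*$ bound on the data and the membrane-adapted integrability lemma of Subsection~\ref{step2}, this gives $\|S^n\|_{L^2(Q_T)}\le C$ uniformly, whence each $u_i^n$ is bounded in $L^2(Q_T)$. Making this integration by parts close --- that is, proving the membrane version of the fundamental $L^2$ lemma --- is the crux.

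From the $L^2$ bound the remaining estimates of the theorem follow by energy manipulations. For $0\le\alpha<\tfrac12$, testing the equation for $u_i^n$ with the bounded increasing function $1-(1+u_i^n)^{2\alpha-1}$ produces a coercive term proportional to $(1-2\alpha)\int_{Q_T}(1+u_i^n)^{2\alpha-2}|\nabla u_i^n|^2$, while the reaction contribution is controlled since \eqref{hpF0} and $u_i^n\in L^2$ force $f_i^n(\mathbf{u}^n)$ to be bounded in $L^1(Q_T)$, and the membrane term has the right sign by the dissipativity of the Kedem--Katchalsky condition and the monotonicity of the test function; this proves $(1+u_i^n)^\alpha$ bounded in $L^2(0,T;H^1(\Omega))$, i.e. \eqref{th1thm1}. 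Writing $\nabla u_i^n=\tfrac{1}{\alpha}(1+u_i^n)^{1-\alpha}\nabla(1+u_i^n)^\alpha$ and applying Hölder, together with the Sobolev embedding of Appendix~\ref{appendix_inequal} for $(1+u_i^n)^\alpha$ and interpolation against $L^\infty(0,T;L^1)$, yields, after letting $\alpha\nearrow\tfrac12$, the uniform bound $u_i^n\in L^\beta(0,T;W^{1,\beta}(\Omega))$ for every $\beta<\tfrac{d}{d-1}$; the trace theorem on each $\Omega^\lambda$ then gives the corresponding $L^\beta(\Gamma)$ bound.

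Finally I would pass to the limit. Using the mass-control inequality I would construct (Subsection~\ref{step3}) a supersolution to a scalar membrane problem whose source $C(1+S^n)$ now lies in $L^2(Q_T)$; the improved duality/energy estimate then endows this supersolution with integrability strictly better than $L^2$, and since each nonnegative $u_i^n$ is dominated by it, the quadratic terms $f_i^n(\mathbf{u}^n)$ become equi-integrable on $Q_T$. The equation bounds $\partial_t u_i^n$ in a negative-order space, so the $W^{1,\beta}$ estimate and the Aubin--Lions--Simon compactness lemma of Appendix~\ref{appendix_compact} give, up to a subsequence, strong convergence $u_i^n\to u_i$ in $L^1(Q_T)$ and a.e.; combined with the locally uniform convergence $f_i^n\to f_i$ and equi-integrability, Vitali's theorem yields $f_i^n(\mathbf{u}^n)\to f_i(\mathbf{u})$ in $L^1(Q_T)$. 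Passing to the limit in the weak formulation tested against $\psi\in\mathcal{D}_i$ --- whose built-in membrane condition makes the boundary terms cancel, so that no boundary integral survives --- then gives \eqref{weaksol}, while $u_i\ge0$ and all the stated integrability bounds are inherited from the uniform estimates, completing the proof.
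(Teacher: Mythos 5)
Your four-step scheme matches the paper's, and the a priori estimates you list (the $L^\infty(0,T;L^1(\Omega))$ mass bound, the $L^2(0,T;H^1(\Omega))$ bound on $(1+u_i^n)^\alpha$ obtained by testing with a bounded increasing function, the $W^{1,\beta}$ bound via H\"older and interpolation, and the trace bound) are in the spirit of Lemma~\ref{compreglemma}. But the decisive step --- passing to the limit in the nonlinearity --- has a genuine gap. A uniform $L^2(Q_T)$ bound on $u_i^n$ gives only a uniform $L^1(Q_T)$ bound on the quadratic terms $f_i^n(\boldsymbol{u}^n)$; it does not give equi-integrability, and a.e.\ convergence plus an $L^1$ bound is not enough for Vitali (concentration is possible). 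Your proposed repair --- dominate $S^n$ by a supersolution of a scalar membrane problem with source $C(1+S^n)\in L^2(Q_T)$ and invoke parabolic regularity to get integrability strictly better than $L^2$ --- does not close: the summed quantity satisfies $\partial_t S^n-\Delta(A^nS^n)\le C(1+S^n)$ with the merely bounded measurable coefficient $A^n$, so any comparison supersolution must be taken for that variable-coefficient operator, for which no better-than-$L^2$ integrability is available unless the $D_i$ are nearly equal; that is exactly the restriction of the improved duality method of \cite{CDF2014}, which the paper does not assume and mentions only as an open adaptation. The paper avoids this entirely: it never proves $L^1$ convergence of $f_i^n(\boldsymbol{u}^n)$. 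It first shows, via the truncations $T_b(u_i^n+\eta\sum_{j\ne i}u_j^n)$ and the control of the cross-diffusion term by $C\eta^{-1/2}$ (Lemmas~\ref{controlS} and \ref{controlS2}), that each $u_i$ is only a weak \emph{super}solution (Theorem~\ref{thmsupersol}, with the limits taken in the order $n\to\infty$, $\eta\to0$, $b\to\infty$); it then recovers the solution property by summing the $m$ equations and applying Fatou to $-\sum_i f_i^n+C(1+W^n)\ge0$, so that the sum is simultaneously a super- and a subsolution (Theorem~\ref{thmsol}). Without some such device, or a genuine equi-integrability input, your limit passage does not yield \eqref{weaksol}.

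A second, smaller issue is that you explicitly defer the membrane $L^2$ lemma (``making this integration by parts close \ldots\ is the crux''), so the proposal does not actually contain a proof of the key estimate on which everything else rests. The backward-duality route you sketch is a legitimate alternative --- the paper itself remarks that the duality and time-integration proofs also require $k_1=\dots=k_m$ --- but it needs uniform second-order estimates for the dual problem with the rough coefficient $A^n$ and membrane conditions, which you would have to supply. The paper's own proof of Lemma~\ref{lemmalp} instead goes through the elliptic problem $-\Delta\widehat W=\widehat U$ with membrane conditions (Lax--Milgram), the comparison $\partial_t\widehat W+\widehat V\le G$ where $-\Delta G=C$, and the identification $\|\widehat U\|_{({\bf H^1})^*}^2=\int_\Omega|\nabla\widehat W|^2+k\int_\Gamma[\widehat W]^2$, which is precisely how the membrane boundary terms are absorbed.
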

	
	%------------------------------------------------------------		
	\subsection{Preliminary lemmas and proof organisation}
	\label{prelim}
	%------------------------------------------------------------	
	
	In order to prove this result, we follow four main steps according to Pierre's method.
	\\
	\\
	{\em First step. Regularization process.}
	We build a regularized problem with a nonnegative classical global solution $\boldsymbol{u}^n$.
	\\
	\\	
	{\em Second step. An $L^2$ lemma.}
	We  extend the Laamri-Perthame \cite{lp} a priori $L^2$ estimate of the solution given an $L^1$ initial data to the case of membrane conditions (see Subsection~\ref{step2}). In particular, we gain
	
	\begin{lemma} [Key estimate with $L^1$ data and membrane conditions]
		Consider smooth functions $z_i:[0,+\infty)\times \Omega \rightarrow \R^+$, $f_i:[0,+\infty)^m \rightarrow \R$,  for all	$i=1,...,m$, with $f_i$ satisfying the assumption~\eqref{hpF1}. Assume $z_{0,i}\in L^1(\Omega) \cap ({\bf H^1})^*$ and that the equation holds with~$k_i = k$
		\begin{equation}
		\left\{\begin{array}{ll}
		\partial_t z_i-D_i\Delta z_i=f_i(z_1,...,z_m),& \mbox{in}\; Q_T,\\[1ex]
		z_i=0,& \mbox{in}\; \Sigma_T,\\[1ex]
		\partial_{\boldsymbol{n}^{1}} z_i^{1}=\partial_{\boldsymbol{n}^{1}} z_i^2=k_i(z_i^2-z_i^{1}), & \mbox{in}\; \Sigma_{T,				\Gamma},\\[1ex]
		z(0,x)=z_{0,i}(x)\geq 0, & \mbox{in}\; \Omega.
		\end{array}
		\right.  
		\end{equation}
		Then, for some constant  $C_3$ depending on $\|\boldsymbol{z}_{0}\|_{{({\bf H^1})^*}}$, the inequality holds
		$$
		\sum_{i=1}^m\;\int_{Q_T} |z_i|^2\leq C_3.
		$$
	\end{lemma}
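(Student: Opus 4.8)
The plan is to follow the Pierre--Laamri--Perthame duality scheme, the whole point being to verify that the Kedem--Katchalsky structure is exactly what makes the boundary contributions behave. \emph{First}, I would reduce the mass-control assumption \eqref{hpF1} to a harmless constant source. Setting $\tilde z_i = e^{-Ct}z_i$ preserves the Dirichlet condition on $\Sigma_T$, the membrane condition on $\Sigma_{T,\Gamma}$ (both relations are linear and homogeneous in $z_i$) and the initial datum, while the new reaction $\tilde f_i = e^{-Ct}(f_i - C z_i)$ satisfies $\sum_i \tilde f_i \le e^{-Ct}\big(C(1+\sum_j z_j) - C\sum_j z_j\big) = Ce^{-Ct}\le C$. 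Writing $w=\sum_i \tilde z_i\ge 0$ and $V=\sum_i D_i\tilde z_i$, and noting that $V=Aw$ with $A$ a measurable function valued in $[\delta,\Lambda]$, $\delta=\min_i D_i$, $\Lambda=\max_i D_i$, the summed system gives $\partial_t w - \Delta V \le C$ in $Q_T$, with $V=0$ on $\Sigma_T$. Here the hypothesis $k_1=\dots=k_m=k$ is essential: summing the relations $\partial_{\boldsymbol n^1}(D_i\tilde z_i^1)=\partial_{\boldsymbol n^1}(D_i\tilde z_i^2)=k\,D_i(\tilde z_i^2-\tilde z_i^1)$ shows that both $w$ and $V$ satisfy a \emph{single} Kedem--Katchalsky condition with the \emph{same} coefficient $k$, namely $\partial_{\boldsymbol n^1}V^1=\partial_{\boldsymbol n^1}V^2=k(V^2-V^1)$ on $\Gamma$. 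It is precisely this that lets me use one common test space $\mathcal D$.

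\emph{Second}, I introduce the backward dual problem: for $\theta\in C_c^\infty(Q_T)$, $\theta\ge 0$, solve $-\partial_t\phi - A\Delta\phi=\theta$ in $Q_T$, $\phi=0$ on $\Sigma_T$, $\partial_{\boldsymbol n^1}\phi^1=\partial_{\boldsymbol n^1}\phi^2=k(\phi^2-\phi^1)$ on $\Sigma_{T,\Gamma}$, $\phi(T)=0$, so that $\phi\in\mathcal D$-type. Since $A$ is only bounded measurable, I would first regularize it by some smooth $A_\varepsilon\in[\delta,\Lambda]$, solve the dual problem classically, derive estimates uniform in $\varepsilon$, and pass to the limit. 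The maximum principle for this dissipative membrane problem gives $\phi\ge0$, which I need to keep the inequality sign. The key dual estimate comes from testing with $-\Delta\phi$ and splitting the integrals over $\Omega^1$ and $\Omega^2$. On $\Gamma^\lambda$ the boundary terms vanish by $\phi=0$; on $\Gamma$ the membrane condition turns the time term into a \emph{dissipative} contribution,
\[
\int_{Q_T}(-\partial_t\phi)(-\Delta\phi)=\tfrac12\|\nabla\phi(0)\|_{L^2}^2+\tfrac{k}{2}\|\phi^2(0)-\phi^1(0)\|_{L^2(\Gamma)}^2\ge 0 ,
\]
using $\sum_\lambda\int_\Gamma\partial_t\phi^\lambda\,\partial_{\boldsymbol n^\lambda}\phi^\lambda=-\tfrac{k}{2}\tfrac{d}{dt}\int_\Gamma(\phi^2-\phi^1)^2$. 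Since $\int_{Q_T}A(\Delta\phi)^2\ge\delta\|\Delta\phi\|_{L^2}^2$, I obtain $\|\Delta\phi\|_{L^2(Q_T)}\le\delta^{-1}\|\theta\|_{L^2(Q_T)}$, together with $\|\nabla\phi(0)\|_{L^2}$ and $\|\phi^2(0)-\phi^1(0)\|_{L^2(\Gamma)}$ bounded by $C\|\theta\|_{L^2}$. By the membrane Poincaré inequality of Appendix~\ref{appendix_inequal} this yields $\|\phi(0)\|_{\mathbf{H}^1}\le C\|\theta\|_{L^2}$ and, per time slice, $\|\phi\|_{L^2(Q_T)}\le C\|\theta\|_{L^2(Q_T)}$.

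\emph{Third}, I run the duality pairing by testing $\partial_t w-\Delta V\le C$ against $\phi\ge0$. Integrating by parts once in time ($\phi(T)=0$) and twice in space over each subdomain, the Dirichlet terms on $\Gamma^\lambda$ vanish, while Green's second identity on $\Gamma$ produces $\sum_\lambda\int_\Gamma\big(V^\lambda\partial_{\boldsymbol n^\lambda}\phi^\lambda-\phi^\lambda\partial_{\boldsymbol n^\lambda}V^\lambda\big)$, which \emph{cancels identically} once both the $V$- and the $\phi$-membrane relations with the common $k$ are inserted (a direct expansion gives $-k\int_\Gamma[V][\phi]+k\int_\Gamma[V][\phi]=0$, the self-adjointness of the Kedem--Katchalsky condition). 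Hence $\int_{Q_T}(-\Delta V)\phi=\int_{Q_T}(-\Delta\phi)\,Aw$, and the pairing collapses to $\int_{Q_T}w\theta=\int_{Q_T}w(-\partial_t\phi-A\Delta\phi)\le\int_\Omega w_0\phi(0)+C\int_{Q_T}\phi$. Bounding $\int_\Omega w_0\phi(0)\le\|w_0\|_{(\mathbf{H}^1)^*}\|\phi(0)\|_{\mathbf{H}^1}\le C\|w_0\|_{(\mathbf{H}^1)^*}\|\theta\|_{L^2}$ and $C\int_{Q_T}\phi\le C|Q_T|^{1/2}\|\phi\|_{L^2}\le C'\|\theta\|_{L^2}$, I get $\int_{Q_T}w\theta\le C\big(1+\|w_0\|_{(\mathbf{H}^1)^*}\big)\|\theta\|_{L^2}$ for all such $\theta$. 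Taking the supremum over $\theta\ge0$ with $\|\theta\|_{L^2}\le 1$ gives $\|w\|_{L^2(Q_T)}\le C$; since $0\le \tilde z_i\le w$ and $z_i=e^{Ct}\tilde z_i$, undoing the exponential yields $\sum_i\int_{Q_T}|z_i|^2\le C_3$ with $C_3$ depending on $\|\boldsymbol z_0\|_{(\mathbf{H}^1)^*}$.

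The routine heat-equation duality is classical; the genuinely new and delicate points are the two boundary computations, and I expect these to be the main obstacle. One must check that the membrane terms give the \emph{correct sign} in the dual estimate (dissipation at $t=0$) and \emph{cancel exactly} in the pairing, both of which hinge on summing to a single Kedem--Katchalsky condition with one coefficient $k$ — hence the standing assumption $k_1=\dots=k_m$. A secondary technical difficulty is the well-posedness, maximum principle and uniform regularity of the dual problem with the merely bounded, discontinuous coefficient $A$ subject to membrane conditions, which I resolve by regularizing $A$ and passing to the limit with all constants uniform.
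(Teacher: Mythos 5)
Your proof is correct, but it follows the \emph{duality} route rather than the paper's. The paper, after the same exponential change of unknown and the same summation to get $\partial_t \h{U}-\Delta\h{V}\le C$ with a single Kedem--Katchalsky coefficient $k$, stays entirely on the elliptic side: it introduces $\h{W}=(-\Delta)^{-1}\h{U}$ via Lax--Milgram for the membrane bilinear form, shows $\underline{G}=\partial_t\h{W}+\h{V}$ is a subsolution of an elliptic membrane problem, bounds it by comparison with the explicit solution $G$ of $-\Delta G=C$, and then multiplies by $\h{U}$ and integrates by parts; the membrane boundary term $-\frac{k}{2}\frac{d}{dt}\int_\Gamma(\h{W}^2-\h{W}^1)^2$ combines with $\frac12\frac{d}{dt}\int_\Omega|\nabla\h{W}|^2$ into $\frac12\frac{d}{dt}\|\h{U}\|^2_{({\bf H^1})^*}$, and time integration yields $\int_{Q_T}\h{U}\h{V}\le C$. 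You instead solve the backward parabolic dual problem $-\partial_t\phi-A\Delta\phi=\theta$ with the same membrane conditions, prove the key estimate $\|\Delta\phi\|_{L^2}\le\delta^{-1}\|\theta\|_{L^2}$ plus control of $\phi(0)$ in ${\bf H^1}$, and conclude by pairing. Both arguments hinge on exactly the same two structural facts — the summed quantities inherit one Kedem--Katchalsky condition with a common $k$ (hence the hypothesis $k_1=\dots=k_m$), and that condition is self-adjoint/dissipative so the $\Gamma$-terms either cancel or have the right sign — and indeed the authors explicitly note that the duality proof works under the same restriction. What each buys: the paper's route needs only elliptic Lax--Milgram, the elliptic comparison principle, and the identification $\|f\|_{({\bf H^1})^*}=B[w,w]^{1/2}$, and it delivers as a by-product the bound on $\|\h{U}(T)\|_{({\bf H^1})^*}$; your route is more modular and generalizes more readily (e.g.\ to $L^p$ duality estimates), but it carries the extra burden of well-posedness, positivity and uniform regularity for a backward non-divergence-form parabolic membrane problem with only bounded measurable $A$, which you correctly flag and handle by regularizing $A$ — legitimate here because the $z_i$ are assumed smooth, so the commutator term $\int_{Q_T} w(A_\varepsilon-A)\Delta\phi_\varepsilon$ can be killed without circularity. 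One small point to make explicit: your ``per time slice'' bound $\|\phi\|_{L^2(Q_T)}\le C\|\theta\|_{L^2(Q_T)}$ requires running the $-\Delta\phi$ test on $(t,T)$ for each $t$ and then invoking the membrane Poincar\'e inequality; as stated it reads as a consequence of the $t=0$ estimate alone.
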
		
	
	From this lemma we derive an $L^1$ bound for the reaction term $\boldsymbol{f}^n(\boldsymbol{u}^n)$ of the regularized system thanks to \eqref{hpF0}.
	The proof uses the solution of  an elliptic problem $-\Delta w=f$ with membrane conditions which has a unique solution thanks to the Lax-Milgram theorem (see \cite{evans}, p.$297$) and we recall its statement in our context. 
	\\
	
	We assume $H$ a real Hilbert space with norm $\|\cdot\|$ and inner product $(\cdot,\cdot)$. Let $\langle\cdot,\cdot\rangle$ denote the pairing of $H$ with its dual space. 
	
	\begin{thm} [Lax-Milgram theorem] \label{lm}
		Given $B: H \times H \rightarrow \R$, a bilinear mapping for which there exist constants $\gamma, \delta >0$ such that for all $w,z\in H$,
		$$
		|B[w,z]|\leq \gamma \|w\| \;\|z\| \quad \mbox{ (continuity) }, \qquad \qquad  |B[w,w]|\geq \delta \|w\|^2 \quad \mbox{ (coercivity)}.
		$$
		Finally, let $f:H\rightarrow\R$ be a bounded linear functional on $H$. Then there exists a unique $w\in H$ such that 
		$$
		B[w,z]=\langle f,z \rangle, \quad \forall z\in H.
		$$
	\end{thm}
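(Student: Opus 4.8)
The plan is to reduce the problem to the Riesz representation theorem applied to a suitable bounded operator on $H$, and then to prove that this operator is invertible using the coercivity bound. First I would fix $w\in H$ and observe that the map $z\mapsto B[w,z]$ is linear and, by the continuity bound $|B[w,z]|\leq\gamma\|w\|\,\|z\|$, a bounded linear functional on $H$. By the Riesz representation theorem there is therefore a unique element, which I will call $Aw\in H$, with $B[w,z]=(Aw,z)$ for every $z\in H$. One checks directly that $w\mapsto Aw$ is linear, and taking $z=Aw$ gives $\|Aw\|^2=B[w,Aw]\leq\gamma\|w\|\,\|Aw\|$, hence $\|Aw\|\leq\gamma\|w\|$, so $A\colon H\to H$ is bounded with $\|A\|\leq\gamma$. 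Applying Riesz to the bounded linear functional $f$ likewise yields a unique $v\in H$ with $\langle f,z\rangle=(v,z)$ for all $z\in H$. In these terms the identity $B[w,z]=\langle f,z\rangle$ for all $z$ is equivalent to $Aw=v$, so it suffices to show that $A$ is a bijection of $H$.

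Next I would extract the two consequences of coercivity. From $(Aw,w)=B[w,w]$ together with $|B[w,w]|\geq\delta\|w\|^2$ and Cauchy--Schwarz,
\[
\delta\|w\|^2\leq|B[w,w]|=|(Aw,w)|\leq\|Aw\|\,\|w\|,
\]
so that $\|Aw\|\geq\delta\|w\|$ for every $w\in H$. This lower bound immediately gives injectivity of $A$, and it also shows that the range $R(A)$ is closed: if $Aw_k\to y$, then $(w_k)$ is Cauchy since $\|w_k-w_\ell\|\leq\delta^{-1}\|Aw_k-Aw_\ell\|$, hence $w_k\to w$ and $Aw=y$ by continuity of $A$.

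The main point, and the step I expect to be the real obstacle, is upgrading this to surjectivity, i.e.\ $R(A)=H$. Since $R(A)$ is a closed subspace, it is enough to prove that its orthogonal complement is trivial. Suppose $z\in H$ satisfies $(Aw,z)=0$ for all $w\in H$; choosing $w=z$ gives $0=(Az,z)=B[z,z]$, whence $\delta\|z\|^2\leq|B[z,z]|=0$ and therefore $z=0$. Thus $R(A)^{\perp}=\{0\}$, and $R(A)$ being closed we conclude $R(A)=H$. Consequently $A$ is a bijection, the equation $Aw=v$ has a unique solution $w\in H$, and this $w$ is the unique element with $B[w,z]=\langle f,z\rangle$ for all $z\in H$. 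An alternative to the orthogonality argument is a fixed-point route: when coercivity holds in the sign-definite form $B[w,w]\geq\delta\|w\|^2$, for small $\rho>0$ the affine map $T_\rho z=z-\rho(Az-v)$ satisfies $\|T_\rho z_1-T_\rho z_2\|^2\leq(1-2\rho\delta+\rho^2\gamma^2)\|z_1-z_2\|^2$, with the bracket strictly less than $1$ for $0<\rho<2\delta/\gamma^2$, so the Banach fixed-point theorem produces the unique solution of $Aw=v$ directly.
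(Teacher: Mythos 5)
Your proof is correct and is precisely the classical argument (Riesz representation to define the bounded operator $A$, the coercivity lower bound $\|Aw\|\geq \delta\|w\|$ giving injectivity and closed range, and triviality of $R(A)^{\perp}$ giving surjectivity); the paper does not reprove this statement but simply cites Evans, p.~297, whose proof is the same one you give, so this counts as essentially the same approach. The only point worth flagging is one you already note yourself: the Banach fixed-point alternative needs the sign-definite form $B[w,w]\geq\delta\|w\|^2$, whereas your main orthogonality argument correctly accommodates the absolute-value form of coercivity as stated.
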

	We can apply the Lax-Milgram theorem for membrane problems (see \cite{serafini}).	In order to justify this, we introduce some definitions. The first ones concern the space $H= {\bf H^1}$ under consideration,  the second is the bilinear form. 
	
	\begin{defin}\label{defh1}
		We define ${\bf H^1} = H^1_{0,\Gamma}(\Omega^1)\times H^1_{0,\Gamma}(\Omega^2)$ as the Hilbert space of functions $H^1(\Omega^1)\times H^1(\Omega^2)$ satisfying Dirichlet homogeneous conditions on $\Gamma^\lambda$, $\lambda=1,2$.	
		We endow it with the norm 
		$$
		\|w\|_{\bf H^1}= \left(\|w^1\|^2_{H^1(\Omega^1)} +\|w^2\|^2_{H^1(\Omega^2)}\right)^\frac{1}{2}.
		$$
		We let $(\cdot,\cdot)$ be the inner product in ${\bf H^1}$ and $\langle\cdot,\cdot\rangle$ denote the pairing of ${\bf H^1}$ with its dual space.
	\end{defin}	
	
	\begin{defin}\label{defh1star}
		We introduce the dual space of ${\bf H^1}$ as $({\bf H^1})^* =  \big( H^1_{0,\Gamma}(\Omega^1)\times H^1_{0,\Gamma}	(\Omega^2)\big)^* =  H^1_{0,\Gamma}(\Omega^1)^*\times H^1_{0,\Gamma}(\Omega^2)^* $.
	\end{defin} 
	
	Now, we define a proper bilinear form associated to the Laplacian operator considering Dirichlet conditions on $\Gamma^\lambda$, $\lambda=1,2$ and membrane conditions on $\Gamma$.
	
	\begin{defin}
		We consider the continuous, coercive bilinear form $B: {\bf H^1} \times {\bf H^1} \rightarrow \R$, such that 
		$$
		B[w,z]= \int_\Omega \nabla w\nabla z + \int_\Gamma k_i (w^2-w^1)(z^2-z^1), \quad \mbox{ for } w,z \in {\bf H^1}.
		$$
	\end{defin} 
	
	We can readily check continuity and coercivity.
	\\
	{\it $B$ is continuous:} thanks to the Cauchy-Schwarz inequality and the continuity of the trace, we can write
	\begin{align*}
	|B[w,z]|&\leq \sum_{1\leq\lambda\leq2}\nl{\nabla w^\lambda}{\Omega^\lambda} \nl{\nabla z^\lambda}{\Omega^\lambda} +C k_i \nl{\,[w]\,}{\Gamma} \nl{\,[z]\,}{\Gamma}\\ &\leq \sum_{1\leq\lambda,\sigma\leq2}\left( \nh{w^\lambda}{\Omega^\lambda}\nh{z^\lambda}{\Omega^\lambda}+C k_i \nh{w^\lambda}{\Omega^\lambda} \nh{z^\sigma}{\Omega^\sigma}\right)\\
	&\leq C\|w\|_{\bf H^1}\|z\|_{\bf H^1},
	\end{align*}	
	\\	
	{\it $B$ is coercive: }	indeed, we can estimate
	$$
	B[w,w] = \int_\Omega |\nabla w|^2 + \int_\Gamma k_i |w^2-w^1|^2 \geq C \|w\|_{\bf H^1}^2,
	$$
	since, thanks to the Dirichlet conditions on $\Gamma^\lambda$, $\lambda=1,3$, and to Theorem \ref{poin}, we have
	$$
	\|w^\lambda\|_{H^1(\Omega^\lambda)} \leq C \nl{\nabla w^\lambda}{\Omega^\lambda}, \quad \mbox{ for } \lambda=1,2.
	$$
	Therefore, using the Lax-Milgram theorem, taking an $L^2$ right-hand side, the elliptic membrane problem has a unique solution $w\in {\bf H^1}$ and, thanks to the Riesz–Fréchet representation theorem (\cite{brezis}, p.$135$) and to the equivalence of the norm $B[w,w]^\frac{1}{2}$ and the original one in ${\bf H^1}$, we have
	\begin{equation}\label{dualnorm}
	\|f\|_{({\bf H^1})^*}=B[w,w]^\frac{1}{2}.
	\end{equation}
	Moreover,   throughout the paper,  we are also allowed  to integrate by parts functions in the Hilbert space ${\bf H^1}$,  considering also the membrane. 
	\\
	\\	 
	{\em Third step. Existence of a global weak supersolution.} 	
	We prove a first theorem which asserts the convergence in $L^1(Q_T)$ of $\boldsymbol{u}^n$ to a supersolution of System~\eqref{model}. 
	Another central result is the following compactness lemma which explains the regularity stated in Theorem~\ref{thmprinc} (see Appendix~\ref{appendix_reg}~and~\ref{appendix_compact}),
	
	\begin{lemma} [A priori bounds] \label{compreglemma1}
		We consider $w$ solution of the problem in dimension $d\geq 2$
		\begin{equation}\label{pb21}
		\left\{\begin{array}{ll}
		\partial_t w-D\Delta w=f,& \mbox{in}\; Q_T,\\[1ex]
		w=0,& \mbox{in}\; \Sigma_T,\\[1ex]
		\partial_{\boldsymbol{n}^{1}} w^{1}=\partial_{\boldsymbol{n}^{1}} w^2=k(w^2-w^{1}), & \mbox{in}\; \Sigma_{T,\Gamma},
		\\[1ex]
		w(0,x)=w_0(x)\geq 0, & \mbox{in}\; \Omega,
		\end{array}
		\right.  
		\end{equation}
		with $f\in L^1(Q_T)$ and $w_0\in L^1(\Omega)$. Then, 
		\begin{itemize}
			\item $w \in L^{\beta}\big(0,T;W^{1,\beta}(\Omega)\big), \; \forall \beta \in \left[1,\frac{d}{d-1}\right)$ and $(1+|w|)^\alpha \in L^2\big(0,T;H^1(\Omega)\big)\; \mbox{ for } \alpha \in \left[0,\frac{1}{2}\right)$.
			\item The mapping $(w_0,f)\longmapsto w$ is compact from $L^1(\Omega)\times L^1(Q_T)$ into $L^1\big(0,T;L^{\gamma_1} (\Omega)\big)$, for all  $\gamma_1<\frac{d}{d-2}$ and $L^{\gamma_2}(Q_T)$ for all   ${\gamma_2}  < \frac{2+d}{d}$.
			\item The trace mapping $(w_0,f)\longmapsto Tr_\Gamma(w)\in L^\beta\big(0,T;L^\beta(\Gamma)\big),\; \beta \in \left[1,\frac{d}{d-1}\right)$ is also compact.
		\end{itemize}
	\end{lemma}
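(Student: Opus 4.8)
The plan is to follow the by-now classical truncation method of Pierre and Boccardo--Gallou\"{e}t, taking care at each step of the two sub-domains and of the flux coupling on $\Gamma$. First I would reduce everything to uniform a priori bounds on smooth solutions: approximate $f$ and $w_0$ by smooth data $f_n\to f$ in $L^1(Q_T)$ and $w_{0,n}\to w_0$ in $L^1(\Omega)$, so that \eqref{pb21} admits classical solutions $w_n$, and establish every bound below uniformly in $n$. The first such bound is the $L^\infty(0,T;L^1(\Omega))$ estimate: testing the equation with a smooth approximation of $\mathrm{sgn}(w_n)$ and integrating by parts on each $\Omega^\lambda$, the Dirichlet terms on $\Gamma^\lambda$ disappear, while the two contributions on $\Gamma$ combine, using $\partial_{\boldsymbol{n}^1}w^1=\partial_{\boldsymbol{n}^1}w^2=k(w^2-w^1)$ and $\boldsymbol{n}^2=-\boldsymbol{n}^1$, into $-Dk\int_\Gamma (w^2-w^1)(\mathrm{sgn}(w^2)-\mathrm{sgn}(w^1))\le 0$. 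This gives $\|w_n\|_{L^\infty(0,T;L^1(\Omega))}\le \|w_{0,n}\|_{L^1}+\|f_n\|_{L^1(Q_T)}$. Note that $w$ need not be nonnegative, since $f$ is arbitrary, which is why one works with $|w|$ throughout.

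For the weighted gradient estimate I would test \eqref{pb21} with the bounded, odd, increasing function $g_\alpha(s)=\mathrm{sgn}(s)\big(1-(1+|s|)^{2\alpha-1}\big)$, whose derivative $g_\alpha'(s)=(1-2\alpha)(1+|s|)^{2\alpha-2}$ is positive since $\alpha<\tfrac12$. Writing $G_\alpha$ for its primitive and integrating by parts on each $\Omega^\lambda$, the boundary integrals on $\Gamma^\lambda$ vanish because $g_\alpha(0)=0$, and the membrane contribution equals $Dk\int_\Gamma (w^2-w^1)\big(g_\alpha(w^2)-g_\alpha(w^1)\big)\ge 0$ by monotonicity of $g_\alpha$, so it may be discarded. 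Integrating in time and using $0\le G_\alpha(s)\le |s|$ together with the $L^\infty_tL^1_x$ bound, one obtains $\int_{Q_T}(1+|w_n|)^{2\alpha-2}|\nabla w_n|^2\le C(\|w_{0,n}\|_{L^1}+\|f_n\|_{L^1(Q_T)})$, that is, $(1+|w_n|)^\alpha$ bounded in $L^2(0,T;H^1(\Omega))$ for every $\alpha\in[0,\tfrac12)$; the $L^2$ part of this norm is controlled because $2\alpha<1$ gives $(1+|w_n|)^{2\alpha}\le 1+|w_n|$. This is the second assertion of the first bullet.

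To pass to $W^{1,\beta}$ I would interpolate: Sobolev embedding applied to $(1+|w_n|)^\alpha\in L^2(0,T;H^1)$ together with the $L^\infty(0,T;L^1)$ bound gives, by a Gagliardo--Nirenberg inequality, space--time integrability of $w_n$; then the pointwise identity $\nabla w_n=\alpha^{-1}(1+|w_n|)^{1-\alpha}\nabla(1+|w_n|)^\alpha$ followed by H\"older (exponents $2/\beta$ and $2/(2-\beta)$) bounds $\nabla w_n$ in $L^\beta(Q_T)$ for every $\beta$ in the stated range, which with the $L^\beta$ bound on $w_n$ yields $w_n$ bounded in $L^\beta(0,T;W^{1,\beta}(\Omega))$. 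The trace statement then follows from the continuity of the trace $W^{1,\beta}(\Omega^\lambda)\to L^\beta(\Gamma)$ on each sub-domain, applied slice-wise in time; the jump part is moreover controlled directly by the nonnegative membrane term produced in the previous step.

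The genuinely delicate point is the compactness of the map $(w_0,f)\mapsto w$, treated in Appendices~\ref{appendix_reg} and~\ref{appendix_compact}. Spatial compactness cannot be read off $\Omega$ as a whole, since $w$ carries a jump across $\Gamma$ and is only piecewise Sobolev; this is precisely why one works in the product space ${\bf H^1}$ of Definition~\ref{defh1} and in its dual. I would run an Aubin--Lions--Simon argument in this framework: the weighted estimate gives $(1+|w_n|)^\alpha$ bounded in $L^2(0,T;H^1)$, hence spatial pre-compactness on each $\Omega^\lambda$ through Rellich, $H^1(\Omega^\lambda)\hookrightarrow\hookrightarrow L^{q}(\Omega^\lambda)$ (the resulting natural integrability being $L^1\big(0,T;L^{\gamma_1}(\Omega)\big)$ with $\gamma_1<\tfrac{d}{d-2}$), while rewriting \eqref{pb21} against the bilinear form $B$ bounds $\partial_t w_n$ in a negative-order space modelled on $({\bf H^1})^*$ and supplies equicontinuity in time. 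The main obstacle is to make this time-regularity estimate compatible with the membrane coupling, so that the flux terms on $\Gamma$ are absorbed into $B$ instead of producing uncontrolled boundary contributions; granting this, Aubin--Lions--Simon yields compactness into $L^1\big(0,T;L^{\gamma_1}(\Omega)\big)$, interpolation with the space--time bound of the first bullet upgrades it to $L^{\gamma_2}(Q_T)$ for $\gamma_2<\tfrac{2+d}{d}$, and the compactness of the trace follows by the same scheme combined with the compact trace embedding on each $\Omega^\lambda$.
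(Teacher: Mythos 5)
Your a priori estimates follow essentially the same path as the paper's proof in Appendix~\ref{appendix_reg}: your test function $g_\alpha(s)=\mathrm{sgn}(s)\big(1-(1+|s|)^{2\alpha-1}\big)$ is, up to normalisation, the paper's $w/(1+|w|^{1/\mu})^{\mu}$ (a bounded increasing function whose derivative scales like $(1+|w|)^{2\alpha-2}$), the membrane term is discarded by the same monotonicity argument $\int_\Gamma (w^2-w^1)(g(w^2)-g(w^1))\ge 0$, and the passage to $L^{\gamma_1}$, $L^{\gamma_2}$ and $W^{1,\beta}$ via Gagliardo--Nirenberg interpolation against the $L^\infty_tL^1_x$ bound and H\"older with the weight $(1+|w|)^{\eta}$ is identical. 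Where you genuinely diverge is the compactness step. The paper proves time compactness directly by the Fr\'echet--Kolmogorov criterion: it mollifies in space, uses the equation to write $w(t+h)-w(t)$ as $\int_t^{t+h}(Dw\ast\Delta\varphi_\delta+f\ast\varphi_\delta)$, and balances $h/\delta^2+h$ with $\delta=h^{1/4}$; no dual space is ever invoked. You instead propose Aubin--Lions--Simon with $\partial_t w_n$ bounded in a negative-order space. That route is standard and can be made to work, but two caveats. First, for $d\ge 3$ you cannot place $\partial_t w_n$ in $({\bf H^1})^*$ as you suggest, since $f_n$ is only bounded in $L^1$ and $L^1(\Omega)\not\hookrightarrow H^{-1}(\Omega)$; you need the dual of a space of $C^1$ (or $W^{1,q}$, $q>d$) functions satisfying the membrane condition, and then the $L^1$-in-time version of Simon's theorem. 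Second, you explicitly defer the key point (``granting this'') --- namely that the flux terms on $\Gamma$ are absorbed into the bilinear form --- which is precisely the estimate that has to be checked; the paper's mollification argument sidesteps it because the convolution is taken against interior test functions and the membrane never appears in the time-translate estimate. So your plan is viable but incomplete exactly where the paper does the real work; the Fr\'echet--Kolmogorov route is more elementary here, while yours, once the correct dual space is identified, would give the marginally stronger statement that $\partial_t w$ lives in $L^1\big(0,T;X^*\big)$ for an explicit $X$.

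Two further small points: the paper does not actually prove the $L^\infty(0,T;L^1(\Omega))$ bound in the appendix (it only remarks that this is the information used in place of $w\in L^2(Q_T)$), so your $\mathrm{sgn}$-test derivation of it is a welcome addition rather than a deviation; and for the trace compactness the paper uses the compact embedding $W^{1-\frac{1}{\beta},\beta}(\Gamma)\subset\subset L^\beta(\Gamma)$ together with a separate Fr\'echet--Kolmogorov argument in time relying on $\pt Tr_\Gamma(w)\in L^1\big(0,T;L^1(\Gamma)\big)$, whereas you only gesture at ``the same scheme''; that time step does need its own justification.
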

	
	\noindent {\em Fourth step. Existence of a global weak solution.}
	We conclude with a second theorem asserting the convergence in $L^1(Q_T)$ of $u_i^n$, $i=1,...,m$ to a solution of System \eqref{model}.

	%%%%%%%%%%%%%%%%%%%
	%--------------------------------------------------
	\section{Proof of the existence result}
	\label{proof}
	%--------------------------------------------------
	%%%%%%%%%%%%%%%%%%%
	
	We are now ready to prove Theorem \ref{thmprinc} according to the previous steps.
	
	%%%%%%%%%%%%%%%%%%
	\subsection{Regularized problem}
	\label{regpb}
	%--------------------------------------------------
	%%%%%%%%%%%%%%%%%%%
	
	First of all, we approximate the initial data and the reaction term as 
	\begin{equation}\label{u0n-fn}
	u_{0,i}^n:=\varphi_{\delta_n} \ast \inf\{u_{0,i},n\}
	\quad\mbox{ and }\quad\displaystyle f_i^n({\bf u}^n):=\frac{f_i({\bf u}^n)}{1\;+\;\frac{1}{n}\;\sum_{1\leq j\leq m} |f_j({\bf u}^n)|} .	
	\end{equation} 
	For the initial data, we consider a regularized version thanks to a convolution with a mollifier sequence $\varphi_{\delta_n} $ which is only used to assert existence in the framework of~\cite{serafini}.
	We readily check that $\boldsymbol{f}^n$ satisfies assumptions \eqref{hpF0}-\eqref{hpF4}. In particular, for \eqref{hpF4},  there is a  $C_M$ such that
	\begin{equation}
	|f_i^n(\boldsymbol{u})-f_i^n(\boldsymbol{v})|\leq C_M\;\sum_{i=1}^m|u_i-v_i|, \quad\forall \boldsymbol{u}, \boldsymbol{v}\in [0,M]^m.
	\end{equation}
	Moreover, we have 
	\begin{equation}
	|f_i^n|\leq n \quad \mbox{ and } \quad \epsilon^n_M:= \sup_{\boldsymbol{u}\in [0,M]^m, i=1,2} |f_i^n(\boldsymbol{u})-f_i(\boldsymbol{u})|\,\leq\, \frac{C(M)m}{n}.
	\end{equation}
	
	We consider an approximation of System~\eqref{model}, for all $i=1,...,m$,
	\begin{equation}\label{modelapprox}
	\left\{
	\begin{array}{ll}
	\partial_t u_i^n-D_i\Delta u_i^n=f_i^n(u^n_1,..., u^n_m),& \mbox{in}\; Q_T,\\[1ex]
	u_i^n=0,& \mbox{in}\; \Sigma_T,\\[1ex]
	\partial_{\boldsymbol{n}^1} u^{n,1}_i=\partial_{\boldsymbol{n}^1} u^{n,2}_i=k_i(u_i^{n,2}-u_i^{n,1}), & \mbox{in}\; \Sigma_{T,\Gamma},\\[1ex]
	u_i^n(0,x)=u^n_{0,i}(x), & \mbox{in}\; \Omega.
	\end{array}
	\right.
	\end{equation}	 
	Since $\boldsymbol{f}^n$ is uniformly bounded for fixed $n$,  from \cite{serafini} we know that there exists a global classical solution $\boldsymbol{u}^n=(u^{n}_1,..., u^{n}_m)$ to~\eqref{modelapprox}.	
	
	%%%%%%%%%%%%%%%%%%%
	\subsection{The $L^2$ lemma with membrane conditions}
	\label{step2}
	%---------------------------------------------------
	
	The second step of the proof is to apply to $u_i^n$, $i=1,...,m$ the following Laamri-Perthame~\cite{lp} version of Pierre's  lemma, adding our membrane conditions.
	
	\begin{lemma}[Key estimate with $L^1$ data and membrane conditions] \label{lemmalp}
		Consider smooth functions $z_i:[0,+\infty)\times \Omega \rightarrow \R^+$, $f_i:[0,+\infty)^m \rightarrow \R$, for all $i=1,...,m$,  with $f_i$ satisfying the assumption \eqref{hpF1}. Assume $z_{0,i}\in L^1(\Omega) \cap ({\bf H^1})^*$ and that the differential equation holds with $k_i = k$
		\begin{equation}\label{senzahat}
		\left\{\begin{array}{ll}
		\partial_t z_i-D_i\Delta z_i=f_i(z_1,...,z_m),& \mbox{in}\; Q_T,\\[1ex]
		z_i=0,& \mbox{in}\; \Sigma_T,\\[1ex]
		\partial_{\boldsymbol{n}^{1}} z_i^{1}=\partial_{\boldsymbol{n}^{1}} z_i^2=k_i(z_i^2-z_i^{1}), & \mbox{in}\; \Sigma_{T,\Gamma},\\[1ex]
		z(0,x)=z_{0,i}(x)\geq 0, & \mbox{in}\; \Omega.
		\end{array}
		\right.  
		\end{equation}
		Then, for some constant $C_3$ depending on $\|\boldsymbol{z}_{0}\|_{{({\bf H^1})^*}}$, the inequality holds
		$$
		\sum_{i=1}^m\;\int_{Q_T} |z_i|^2\leq C_3.
		$$
	\end{lemma}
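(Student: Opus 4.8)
The plan is to run a duality/energy argument in the dual norm $\|\cdot\|_{({\bf H^1})^*}$ — precisely the quantity appearing in the statement — adapting the Laamri–Perthame scheme so that the membrane contributions are handled. I would introduce the mass $S:=\sum_{i=1}^m z_i\ge 0$ and the diffusion-weighted mass $A:=\sum_{i=1}^m D_i z_i$. Since the $z_i$ are nonnegative, $A\ge \underline D\,S$ pointwise with $\underline D:=\min_i D_i>0$, and summing the equations gives $\partial_t S-\Delta A=\sum_i f_i(\boldsymbol z)\le C(1+S)$ by \eqref{hpF1}. The crucial algebraic observation is that, because $k_1=\dots=k_m=k$, the flux-continuity and jump relations are inherited by $A$: from $\partial_{\boldsymbol{n}^1} z_i^1=\partial_{\boldsymbol{n}^1} z_i^2=k(z_i^2-z_i^1)$ one gets $\partial_{\boldsymbol{n}^1} A^1=\partial_{\boldsymbol{n}^1} A^2=k(A^2-A^1)=k[A]$, i.e. $A$ solves a membrane problem with the very same constant $k$. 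This is exactly where the hypothesis $k_i\equiv k$ is used; for unequal $k_i$ the weighted mass would not satisfy a single Kedem–Katchalsky condition.

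For each fixed $t$, let $w(t,\cdot)\in {\bf H^1}$ be the solution of the elliptic membrane problem $-\Delta w=S(t)$ furnished by Lax–Milgram (Theorem~\ref{lm}) with the bilinear form $B$, so that $w$ satisfies the natural membrane conditions $\partial_{\boldsymbol{n}^1} w^1=\partial_{\boldsymbol{n}^1} w^2=k[w]$, and $w\ge 0$ by the maximum principle since $S\ge 0$. By \eqref{dualnorm} and the characterization of $B$, $\|S(t)\|_{({\bf H^1})^*}^2=B[w,w]=\int_\Omega S(t)\,w(t)$, and coercivity gives $\|w\|_{\bf H^1}\le C\|S\|_{({\bf H^1})^*}$. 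Differentiating the dual norm in time and using the self-adjointness of the membrane-Laplacian inverse, I would obtain $\tfrac12\tfrac{d}{dt}\|S\|_{({\bf H^1})^*}^2=\int_\Omega \partial_t S\,w=\int_\Omega(\Delta A+g)\,w$, where $g=\sum_i f_i$.

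The heart of the computation is a double integration by parts (Green's second identity) across the membrane: $\int_\Omega \Delta A\,w=-\int_\Omega A\,S+\mathrm{MEM}$, with $\mathrm{MEM}=\sum_{\lambda}\int_\Gamma\big(\partial_{\boldsymbol{n}^\lambda}A^\lambda\,w^\lambda-A^\lambda\,\partial_{\boldsymbol{n}^\lambda}w^\lambda\big)$; the contributions on $\Gamma^\lambda$ drop out because both $A=0$ and $w=0$ there. Since $A$ and $w$ obey the same membrane condition with the same $k$, the four boundary integrals cancel in pairs and $\mathrm{MEM}=0$ — this is the membrane analogue of the classical cancellation, and I expect it to be the main obstacle of the adaptation. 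Using $A\ge\underline D\,S$, $g\le C(1+S)$, and $\int_\Omega w\le C\|w\|_{\bf H^1}\le C\|S\|_{({\bf H^1})^*}$, I arrive at
$$
\tfrac12\tfrac{d}{dt}\|S\|_{({\bf H^1})^*}^2+\underline D\int_\Omega S^2\le C\|S\|_{({\bf H^1})^*}+C\|S\|_{({\bf H^1})^*}^2 .
$$

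Finally, discarding the good term $\underline D\int_\Omega S^2\ge 0$, a Gronwall argument on $y(t):=\|S(t)\|_{({\bf H^1})^*}^2$ (using $\sqrt y\le\tfrac12(1+y)$) yields $\sup_{[0,T]}y(t)\le C(T,\|S(0)\|_{({\bf H^1})^*})$, and $\|S(0)\|_{({\bf H^1})^*}\le\sum_i\|z_{0,i}\|_{({\bf H^1})^*}$ is controlled by the data. Reinserting this uniform bound and integrating the differential inequality over $(0,T)$ bounds $\underline D\int_{Q_T}S^2$ by a constant $C_3$ depending only on $\|\boldsymbol z_0\|_{({\bf H^1})^*}$ and $T$. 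Since the $z_i$ are nonnegative, $\sum_i z_i^2\le S^2$, whence $\sum_i\int_{Q_T}|z_i|^2\le\int_{Q_T}S^2\le C_3$, which is the claim. The remaining points — justifying the time-differentiation of the dual norm, the integrations by parts for the smooth $z_i$, and the sign $w\ge 0$ — I would treat as standard.
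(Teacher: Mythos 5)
Your proposal is correct and follows essentially the same duality argument as the paper: both control $\frac{d}{dt}\|\sum_i z_i\|^2_{({\bf H^1})^*}$ by testing against the elliptic membrane lift furnished by Lax--Milgram, and both hinge on the observation that, since $k_i\equiv k$, the weighted sum $\sum_i D_i z_i$ obeys the same Kedem--Katchalsky condition as the lift, so the membrane boundary terms cancel in the integration by parts and the cross term $\int_\Omega S\,A\ge \underline{D}\int_\Omega S^2$ delivers the $L^2(Q_T)$ bound. The only divergence is in handling the source: the paper first rescales by $e^{-Ct}$ to reduce the right-hand side to a constant and then compares with a stationary elliptic supersolution and applies Young's inequality, whereas you keep the $C(1+S)$ source and close with a Gronwall argument on the dual norm --- an equivalent piece of bookkeeping.
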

	It is an open problem to extend it to the case where the constants $k_i$ are different and it is also noticeable that the other proofs (time integration or duality) also apply only with the condition $k_i=k$. 	
	\begin{proof}
		We consider $\widehat{u}_i=e^{-Ct}z_i$ for $i=1,...,m$,  where C is the same constant than in \eqref{hpF1}. Substituting in the equation for $z_i$, we obtain that for all $i=1,...,m$,
		$$
		\partial_t \widehat{u}_i-D_i\Delta \widehat{u}_i=e^{-Ct}[f_i(z_1,...,z_m)-C z_i],
		$$
		with the same boundary  and initial conditions as in \eqref{senzahat} but for $\h{u}_i.$ Adding up and defining 
		$$
		\h{U}=\sum\limits_{i=1}^{m} \widehat{u_i}, \qquad \h{V}=\sum\limits_{i=1}^{m} D_i\widehat{u_i},
		$$
		we obtain
		\begin{equation}\label{hat}
		\partial_t \h{U}-\Delta \h{V} =e^{-Ct}[\sum_{i=1}^{m}f_i(z_1,...,z_m)-C\sum_{i=1}^{m}z_i]\leq Ce^{-Ct}\leq C, \quad \mbox{ in } Q_T    
		\end{equation}
		with conditions
		$$ \left\{\begin{array}{ll}
		\h{U}= 0,& \mbox{in}\; \Sigma_T,\\[1ex]
		\partial_{\boldsymbol{n}^{1}} \h{U}^{1}=\partial_{\boldsymbol{n}^{1}} \h{U}^2= k(\h{U}^2-\h{U}^{1}), & \mbox{in}\; \Sigma_{T,\Gamma},\\[1ex]
		\h{U}(0,x)=\h{U}_{0}(x)\geq 0, & \mbox{in}\; \Omega.
		\end{array}
		\right.  $$
		Thanks to the Lax-Milgram theorem \ref{lm} (see also \cite{BCF}),  we may define the solution of 
			$$ \left\{\begin{array}{ll}
			-\Delta \h{W}=\h{U}, &\mbox{in } Q_T\\[1ex]
			\h{W}=0,& \mbox{in } \Sigma_T,\\[1ex]
			\partial_{\boldsymbol{n}^{1}} \h{W}^{1}=\partial_{\boldsymbol{n}^{1}} \h{W}^2= k(\h{W}^2-\h{W}^{1}), & \mbox{in } \Sigma_{T,\Gamma}.\\[1ex]
			\end{array}
			\right.  $$
			So, at this point, with $\underline{G}=\partial_t \h{W} +\h{V}$,  we can write \eqref{hat} as an elliptic inequality 
			$$
			\left\{\begin{array}{ll}
			-\Delta \underline{G}\leq C, &\qquad  \mbox{in } Q_T,
			\\[1ex]
			\underline{G}= 0,&\qquad  \mbox{in }\Sigma_T,
			\\[1ex]
			\partial_{\boldsymbol{n}^{1}} \underline{G}^{1}=\partial_{\boldsymbol{n}^{1}} \underline{G}^2 &\!= k(\partial_t \h{W}^2-\partial_t\h{W}^{1})+k \sum\limits_{i=1}^m D_i(\h{u}_i^2-\h{u}_i^{1}) \\[1ex]
			&\!=k[\partial_t \h{W}] + k[\h{V}]=k(\underline{G}^{2}-\underline{G}^{1}), \qquad  \mbox{in }\; \Sigma_{T,\Gamma}. 
			\end{array}
			\right.    
			$$
		\begin{comment}
		Observe that, using the maximum principle (\cite{brezis,serafini}), one immediately concludes that $\underline{G}\leq 0.$
		Consequently, we deduce
		\begin{equation}\label{Gbound}
		\partial_t \h{W} +\h{V}\leq 0.
		\end{equation}
		\end{comment}
		Lax-Milgram theorem \ref{lm} allows us to state the existence of a function $G\in \mathbf{H}^1$ satisfying the system
		\[\left\{\begin{array}{ll}
			-\Delta {G}= C, &  \mbox{in } Q_T,
			\\[1ex]
			{G}= 0,&  \mbox{in }\Sigma_T,
			\\[1ex]
			\partial_{\boldsymbol{n}^{1}} {G}^{1}=\partial_{\boldsymbol{n}^{1}} {G}^2 =k({G}^{2}-{G}^{1}), &  \mbox{in }\; \Sigma_{T,\Gamma}. 
		\end{array}
		\right.    \]
		By comparison theorem~\cite{serafini}, we conclude that $\underline{G}\leq G$, in $\overline{Q}_T$. 
		%Given the $\mathbf{H}^1$ regularity of $G$, thanks to a regularising procedure, we assert an $L^\infty$ bound for $G$. 	Finally, combined with the $L^1$-\textit{a priori} bound of $\hat{U}$ (given by the mass control structure), multiplying $\underline{G}$ by $\h{U}$ and integrating over space, we compute, since $\h{U}=-\Delta \h{W}$,
		So, multiplying $\underline{G}$ by $\h{U}$ and integrating over space, we compute, since $\h{U}=-\Delta \h{W}$ and $\underline{G}\leq G$,
		\[\int_\Omega \h{U}\underline{G}=-\int_\Omega \Delta\h{W}\pt \h{W} \;+\;\int_\Omega \h{U}\h{V} \leq \int_\Omega \h{U} G \leq \int_\Omega \frac{D\h{U}^2}{2} + \frac{G^2}{2D}\leq \frac{1}{2} \int_\Omega \h{U}\h{V} +C_1,\]
		%$$\int_\Omega \h{U}\pt \h{W} \;+\;\int_\Omega \h{U}\h{V}=	-\int_\Omega \Delta\h{W}\pt \h{W} \;+\;\int_\Omega \h{U}\h{V}\;\leq\; \int_\Omega \h{U} G \leq C_1.$$
		thanks to Young's inequality applied to $\sqrt{D} \h{U}$ and $\frac{G}{\sqrt{D}}$ with $D=\min_{i=1,...,m} D_i>0$, see~\cite{brezis}, the fact that by definition $D\h{U}\leq \h{V}$, and the $L^2$-bound of~$G$. Then, reorganising the terms on the right and left hand-side, we derive
		\[-\int_\Omega \Delta\h{W}\pt \h{W} \;+\;\frac{1}{2}\int_\Omega \h{U}\h{V} \leq C_1.\]
		Following Subsection~\ref{prelim} and the definition of the Hilbert space  ${\bf H^1}$ (see Definition~\ref{defh1}), we can integrate by parts obtaining
		$$
		\frac{1}{2} \frac{d}{dt} \int_{\Omega} |\nabla \h{W}|^2 \;+\;\frac{1}{2}\int_\Omega \h{U}\h{V}\;\leq\; \int_{\p\Omega}\p_n\h{W} \pt\h{W}+C_1. 
		$$ 
		Next, we remark that
		$$\begin{array}{rl}
		\displaystyle \int_0^T\! \! \int_{\p\Omega}\p_n\h{W} \pt\h{W} &=\displaystyle \int_0^T \!\!  \int_{\Gamma}\p_{\boldsymbol{n}^1}\h{W}^1 (\pt\h{W}^1-\pt\h{W}^2)
		\\[2ex]
		&\displaystyle  =-\int_0^T \! \!\int_{\Gamma}k(\h{W}^2-\h{W}^1) \pt(\h{W}^2-\h{W}^1)=-\frac{k}{2} \int_0^T  \! \frac{d}{dt}\int_\Gamma (\h{W}^2-\h{W}^1)^2.
		\end{array}$$
		Therefore, integrating in time and using the relation \eqref{dualnorm}, we arrive to
		\begin{equation}\label{dimlemma}
		\frac{1}{2} \|\h{U}(T)\|^2_{({\bf H^1})^*} \;+\; \frac{1}{2}\int_{Q_T} \h{U}\h{V}\;\leq\;\frac{1}{2} \|\h{U}_0\|^2_{({\bf H^1})^*} + C_1 .
		\end{equation}
		Finally, thanks to Equation~\eqref{dimlemma}, we can assert that
		$$
		\sum\limits_{i=1}^{m} D_i\int_{Q_T} \widehat{u_i}^2\leq C_2.
		$$
		This concludes the proof of Lemma~\ref{lemmalp} since  $z_i^2= e^{2Ct}\widehat{u_i}^2$.
		
	\end{proof}
	
	%%%%%%%%%%%%%%%%%%%
	\subsection{Existence of a global weak supersolution}
	\label{step3}
	%-------------------------------------------------------
	
	At this point we can complete the existence result of Theorem~\ref{thmprinc}, since, thanks to Lemma~\ref{lemmalp} and to assumption  \eqref{hpF0}, we know that  the reaction term $\boldsymbol{f}^n$ is bounded in $L^1$. With this in hands, we can assert the existence of a supersolution of System \eqref{model}.
	%%%%%%%%%%%%%%%%%%%%
	
		\begin{thm} [Existence of a supersolution] \label{thmsupersol}
			Let $\boldsymbol{u}^n=(u^n_1,...,u^n_m)$ be a nonnegative solution of the approximate System~\eqref{modelapprox}. Consider $k_1=...=k_m$. 
			As defined in \eqref{u0n-fn},  $f_i^n(\boldsymbol{u}^n)$ is bounded in $L^1(Q_T)$, for  $i=1,...,m$ and ${\bf u}^n_0 \rightarrow {\bf u}_0$ in $L^1(\Omega)$.
			Then, up to a sub-sequence, $\boldsymbol{u}^n$ converges in $L^1(Q_T)$ and a.e. to a supersolution $\boldsymbol{u}$ of System \eqref{model} which means that for $i=1,...,m$, and $\beta\in \left[1,\frac{d}{d-1}\right)$,
			$$
			f_i(\boldsymbol{u})\in L^1(Q_T), \quad  u_i \in L^\beta\big(0,T; W^{1,\beta}(\Omega)\big), \quad Tr_\Gamma ( u_i) \in L^\beta\big(0,T; L^\beta(\Gamma)\big),
			$$
			\begin{equation}\label{supersol}
			-\int_{\Omega} \psi(0,x)u_{0,i}+\int_{Q_T} (-\psi_t u_i+D_i\nabla\psi\nabla u_i)+\int_0^T\int_{\Gamma}D_i k_i[u_i][\psi]\geq \int_{Q_T} \psi f_i,
			\end{equation}
			for all  $\psi\in \mathcal{D}_i$, $\psi \geq 0$. 
	\end{thm}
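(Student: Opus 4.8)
The plan is to pass to the limit $n\to\infty$ in the weak formulation of the approximate system \eqref{modelapprox} and to recover a one-sided inequality for the reaction term, which is exactly what distinguishes a supersolution from a solution. First I would record the uniform estimates. Applying Lemma~\ref{compreglemma1} to each $u_i^n$, with source $f_i^n(\boldsymbol u^n)$ — bounded in $L^1(Q_T)$ by hypothesis — and initial datum $u_{0,i}^n$, I obtain that $u_i^n$ is bounded in $L^\beta\big(0,T;W^{1,\beta}(\Omega)\big)$ and that $(1+|u_i^n|)^\alpha$ is bounded in $L^2\big(0,T;H^1(\Omega)\big)$ for the stated ranges of $\beta,\alpha$, while the compactness part of the same lemma yields relative compactness of $u_i^n$ in $L^1(Q_T)$ (and in $L^{\gamma_2}(Q_T)$) and of the traces $Tr_\Gamma(u_i^n)$ in $L^\beta\big(0,T;L^\beta(\Gamma)\big)$. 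In addition, Lemma~\ref{lemmalp} furnishes a uniform $L^2(Q_T)$ bound on the $u_i^n$; this is where the assumption $k_1=\dots=k_m$ is used.

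Extracting a subsequence, I would fix for each $i$ a limit $u_i$ with $u_i^n\to u_i$ strongly in $L^1(Q_T)$ and a.e.\ in $Q_T$, with $\nabla u_i^n\rightharpoonup\nabla u_i$ weakly in $L^\beta(Q_T)^d$ (the weak limit is identified with $\nabla u_i$ because $u_i^n\to u_i$ in the sense of distributions), so that $u_i\in L^\beta\big(0,T;W^{1,\beta}(\Omega)\big)$, and with $Tr_\Gamma(u_i^n)\to Tr_\Gamma(u_i)$ strongly in $L^\beta\big(0,T;L^\beta(\Gamma)\big)$, hence $[u_i^n]\to[u_i]$ there. By continuity of the $f_i$, the a.e.\ convergence of $\boldsymbol u^n$, and $f_i^n\to f_i$ locally uniformly, I get $f_i^n(\boldsymbol u^n)\to f_i(\boldsymbol u)$ a.e.; Fatou's lemma applied to $|f_i^n(\boldsymbol u^n)|$ together with the uniform $L^1$ bound then gives $f_i(\boldsymbol u)\in L^1(Q_T)$.

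Next I would write the weak formulation of \eqref{modelapprox}: multiplying by $\psi\in\mathcal D_i$ and integrating by parts across the membrane, using the conditions defining $\mathcal D_i$ to convert the normal-derivative terms into the jump term, one gets the \emph{equality}
\begin{equation*}
-\int_{\Omega}\psi(0,x)\,u_{0,i}^n+\itx\big(-\psi_t\,u_i^n+D_i\nabla\psi\nabla u_i^n\big)+\int_0^T\!\!\int_\Gamma D_i k_i[u_i^n][\psi]=\itx \psi\, f_i^n(\boldsymbol u^n).
\end{equation*}
All the linear terms pass to the limit by the convergences above: the initial term because $u_{0,i}^n\to u_{0,i}$ in $L^1(\Omega)$, the term $\itx\psi_t u_i^n$ by strong $L^1$ convergence, the diffusion term by weak $L^\beta$ convergence of the gradients against the smooth $\nabla\psi$, and the membrane term by strong convergence of the traces $[u_i^n]$.

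The main obstacle is the reaction term, for which only an $L^1$ bound is available; I therefore split $f_i^n(\boldsymbol u^n)=(f_i^n(\boldsymbol u^n))^+-(f_i^n(\boldsymbol u^n))^-$. For the positive part, $\psi\ge0$ and a.e.\ convergence give, by Fatou, $\liminf\itx\psi\,(f_i^n(\boldsymbol u^n))^+\ge\itx\psi\,(f_i(\boldsymbol u))^+$. The delicate point is the negative part, where quasi-positivity \eqref{hpF2} is essential: writing $\widetilde{\boldsymbol u}^n$ for $\boldsymbol u^n$ with its $i$-th entry set to zero, one has $f_i^n(\widetilde{\boldsymbol u}^n)\ge0$, hence $(f_i^n(\boldsymbol u^n))^-\le |f_i^n(\boldsymbol u^n)-f_i^n(\widetilde{\boldsymbol u}^n)|$, which by \eqref{hpF4} and \eqref{hpF0} is controlled by $C\,u_i^n\big(1+\sum_j u_j^n\big)$. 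Reconciling this control with the uniform integrability needed to pass to the limit — so that $\itx\psi\,(f_i^n(\boldsymbol u^n))^-\to\itx\psi\,(f_i(\boldsymbol u))^-$ — is precisely the crux, and I would lean here on the $L^2(Q_T)$ bound of Lemma~\ref{lemmalp} reinforced by the improved integrability $(1+|u_i^n|)^\alpha\in L^2\big(0,T;H^1(\Omega)\big)$. Combining the two parts yields $\liminf\itx\psi\,f_i^n(\boldsymbol u^n)\ge\itx\psi\,f_i(\boldsymbol u)$; since the equality above forces $\itx\psi\,f_i^n(\boldsymbol u^n)$ to converge to the limit of the linear side, I conclude the supersolution inequality \eqref{supersol}. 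I expect the asymmetric treatment of the two parts — Fatou loses no mass downward but may lose it upward through concentration of the positive part — to be exactly the reason one obtains a supersolution rather than a solution.
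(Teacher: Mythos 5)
Your preparatory steps (uniform bounds from Lemma~\ref{compreglemma1} applied with the $L^1$-bounded source $f_i^n(\boldsymbol u^n)$, the $L^2(Q_T)$ bound from Lemma~\ref{lemmalp}, extraction of a.e./$L^1$/weak-gradient/trace limits, Fatou to get $f_i(\boldsymbol u)\in L^1(Q_T)$, and the passage to the limit in all the linear terms of the approximate weak formulation) coincide with the first part of the paper's proof. The divergence, and the gap, is in the treatment of the reaction term. Your plan is to split $f_i^n(\boldsymbol u^n)$ into positive and negative parts, apply Fatou to the positive part, and obtain genuine convergence of the negative part from uniform integrability. The generalized Fatou argument you are invoking does indeed require exactly that: an a.e.-convergent, \emph{uniformly integrable} control of $\big(f_i^n(\boldsymbol u^n)\big)^-$. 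But the bound you propose, $\big(f_i^n(\boldsymbol u^n)\big)^-\le C\,u_i^n\big(1+\sum_j u_j^n\big)$, is a product of sequences each of which is only bounded in $L^2(Q_T)$; Cauchy--Schwarz then gives a bound in $L^1(Q_T)$ and nothing more. Boundedness in $L^1$ does not exclude concentration, so uniform integrability does not follow from Lemma~\ref{lemmalp}; and the improved regularity $(1+|u_i^n|)^\alpha\in L^2\big(0,T;H^1(\Omega)\big)$ with $\alpha<\tfrac12$ only yields $u_i^n\in L^{\gamma_2}(Q_T)$ with $\gamma_2<\tfrac{2+d}{d}\le 2$, so it adds nothing beyond the $L^2$ bound. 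This missing equi-integrability is precisely the known obstruction for quadratic nonlinearities under \eqref{hpF0}--\eqref{hpF4}, and ``I would lean on the $L^2$ bound'' is the one step of your argument that is not, and cannot easily be, justified.

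The paper circumvents exactly this point by never attempting to pass to the limit in $f_i^n(\boldsymbol u^n)$ itself. It writes the differential \emph{inequality} satisfied by $v^n=T_b\big(u_i^n+\eta\sum_{j\ne i}u_j^n\big)$, using $T_b''\le 0$ so that $-\Delta T_b(W)\ge -T_b'(W)\Delta W$. On the set where $T_b'\ne 0$ one has $u_i^n\le b$ and $u_j^n\le b/\eta$, so the truncated reaction term $R_i^n$ is uniformly bounded and dominated convergence applies; the cross-diffusion remainder $S_i^n$ is controlled by $C\eta^{-1/2}$ via Lemmas~\ref{controlS} and~\ref{controlS2}. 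Letting $n\to\infty$, then $\eta\to 0$, then $b\to\infty$ produces \eqref{supersol}. To salvage your route you would have to prove equi-integrability of $\big(f_i^n(\boldsymbol u^n)\big)^-$, which under the stated hypotheses is not available; the truncation method is not an optional refinement here but the substitute for that missing property.
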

	
	\begin{proof} 
		We divide the proof in several steps which are adaptations from Pierre's method. 
		\\
		\\
		\noindent {\bf Compactness of $\boldsymbol{u}^n$ and $Tr_\Gamma(\boldsymbol{u}^n)$.} Combining  Lemma~\ref{lemmalp} and assumption \eqref{hpF0}, we notice that  $f_i^n(\boldsymbol{u}^n)$ is bounded in $L^1(Q_T)$ for $i=1,...,m$.

		Next, we  apply  the compactness Lemma~\ref{compreglemma1} (see also Lemma~\ref{compreglemma} and its proof in Appendix~\ref{appendix_reg},~\ref{appendix_compact}) to the solution $\textbf{u}^n$ of the approximate System~\eqref{modelapprox}.  Accordingly, after extraction, the following convergences, hold 
		\begin{equation}\label{convsol}
		\left\{\begin{array}{l l l}
		\boldsymbol{u}^n \rightarrow \boldsymbol{u}, & \mbox{ in } L^1\big(0,T;L^{\gamma_1}(\Omega)\big)^m, & \forall \gamma_1 \in \Big[1,\frac{d}{d-2}\Big),  
		\\[1ex]
		\boldsymbol{u}^n \rightarrow \boldsymbol{u}, &  \mbox{ a.e. in} \; Q_T,  
		\\[1ex]
		\nabla \boldsymbol{u}^n \rightharpoonup \nabla \boldsymbol{u}, & \mbox{ in } [ L^{\beta}(Q_T)^d]^m, & \forall \beta \in \left[1,\frac{d}{d-1}\right),
		\\[1ex]
		Tr_\Gamma(\boldsymbol{u}^n) \rightarrow Tr_\Gamma(\boldsymbol{u}), & \mbox{ in } L^1\big(0,T;L^{\beta}(\Gamma)\big)^m, & \forall \beta \in \left[1,\frac{d}{d-1}\right).
		\end{array}
		\right.
		\end{equation}
		
		\noindent {\bf Pointwise convergence of the  $f_i^n$'s.} 
		Since $u_i^n$ satisfies \eqref{weaksol} for all $i=1,...,m$, i.e.
		\begin{equation}\label{nweaksol}
		-\int_{\Omega} \psi(0,x)u^n_{0,i}+\int_{Q_T} (-\psi_t u_i^n+D_i\nabla\psi\nabla u_i^n)+\int_0^T\int_{\Gamma}D_i k_i[u_i^n][\psi]= \int_{Q_T} \psi f_i^n, 
		\end{equation}
		and our goal is to pass to the limit as $n\rightarrow +\infty$, we need to study the convergence of $f_i^n$.
		\\
		Thanks to the choice of $\boldsymbol{f}^n$: a.e. convergence of $\epsilon_M^n$ to zero and the continuity with respect to its argument, we infer
		$$f_i^n(\boldsymbol{u}^n)\rightarrow f_i(\boldsymbol{u}) \mbox{ a.e. in } Q_T.$$
		By Fatou's lemma, we know that
		$$\int_{Q_T} |\boldsymbol{f}(\boldsymbol{u})| \leq \liminf_{n\rightarrow +\infty} \int_{Q_T} |\boldsymbol{f}^n(\boldsymbol{u}^n)|$$
		and, in particular, it holds 
		$$\boldsymbol{f}(\boldsymbol{u})\in L^1(Q_T)^m.$$
		So far we did not prove $L^1$-convergence of $f_i^n(\boldsymbol{u}^n)$, therefore we cannot pass to the limit in the Equation~\eqref{nweaksol} obtaining a weak solution of System \eqref{model}. However we can find an inequality in the formulation of the weak solution of System \eqref{model}, thus obtaining a supersolution. We arrive at this applying a truncation method. \\[1ex]
		
		\noindent{\bf Truncation method.}
		The idea is that, with an appropriate truncation, we succeed in obtaining a reaction-diffusion inequality in which the reaction terms
		are under control  as $n\rightarrow +\infty$ with a fixed  truncation level. In this way, we are able to pass to the limit in the truncated weak supersolution formula, as $n\rightarrow +\infty$. At this point, bringing the truncation level to infinity, we gain the supersolution property in Theorem~\ref{thmsupersol}.
		
		In order to build the truncation $T_b$ at level $b$, since we will have to differentiate twice $T_b$, we replace $T_b$ by a $C^2$-regularized version (otherwise $T_b''$ would be a Dirac mass), still denoted by $T_b$, so that on $[0,+\infty)$ we have
		$$
		0\leq T'_b\leq 1, \;\quad -1\leq T''_b\leq 0, \;\quad T_b(\sigma)=\sigma \quad\forall \sigma\in [0,b], \;\quad T'_b(\sigma)=0 \quad \forall \sigma \in (b,+\infty).
		$$ 
		We fix $\eta\in (0,1)$ and we denote for all $i=1,...,m$,
		$$U^n_i=\sum_{j\ne i} u_j^n, \quad W^n_i=u^n_i+\eta U^n_i.$$
		The idea is to consider the limit for $n\rightarrow +\infty$, then $\eta \rightarrow 0$ and, finally, $b\rightarrow +\infty$.
		
		\noindent The main point is to use the inequality satisfied by $v^n :=  T_b(W^n_i)$, taking into account the previous properties of $T'_b$ and $T''_b,$
		\begin{align*}
		-\Delta v^n = -\Delta T_b(u_i^n+\eta U^n_i)&=-T''_b (u_i^n+\eta U^n_i)|\nabla u_i^n+\eta\nabla U^n_i|^2\,-\,T'_b(u_i^n+\eta U^n_i)[\Delta u_i^n+\eta\Delta U^n_i]\\[1ex]
		&\geq -T'_b(u_i^n+\eta U^n_i)[\Delta u_i^n+\eta\Delta U^n_i].
		\end{align*}
		\noindent This implies
		$$v^n_t-D_i\Delta v^n\geq T'_b(u_i^n+\eta U^n_i)[f_i^n+\eta \sum_{j\ne i} f^n_j]+\eta T'_b(u_i^n+\eta U^n_i)\sum_{j\ne i}(D_j-D_i)\Delta u^n_j =:R^n_i+\eta S^n_i,$$
		where 
		\begin{equation} \label{def:RS}
		R^n_i=T'_b(u_i^n+\eta U^n_i)[f_i^n+\eta \sum_{j\ne i} f^n_j], \qquad S^n_i=T'_b(u_i^n+\eta U^n_i)\sum_{j\ne i}(D_j-D_i)\Delta u^n_j.
		\end{equation}
		So the truncation $T_b(W^n_i)$ solves the problem
		{\small\begin{equation}
			\left\{
			\begin{array}{ll}
			v^n_t-D_i\Delta v^n\geq R^n_i+\eta S^n_i, \\[1ex]
			\begin{array}{ll}
			v^n_{|_{\Gamma^{\lambda}}}&=0,\quad \lambda=1,2,\\[1ex]
			\partial_{\boldsymbol{n}^1} v^{n,1}_{|_\Gamma}&=T'_b(u^{n,1}_i+\eta U^{n,1}_i)[\partial_{\boldsymbol{n}^1} u^{n,1}_i+\eta \partial_{\boldsymbol{n}^1} U^{n,1}_i]\\[1ex]
			&=T'_b(u^{n,1}_i+\eta U^{n,1}_i)[k_i(u^{n,2}_i-u^{n,1}_i) +\eta \sum_{j\ne i} k_j(u^{n,2}_j-u^{n,1}_j)]=:T'_{b,n,1} V^n_{i},\\[1ex]
			\partial_{\boldsymbol{n}^1} v^{n,2}_{|_\Gamma}&=T'_b(u^{n,2}_i+\eta U^{n,2}_i)[\partial_{\boldsymbol{n}^1} u^{n,2}_i+\eta \partial_{\boldsymbol{n}^1} U^{n,2}_i]\\[1ex]
			&=T'_b(u^{n,2}_i+\eta U^{n,2}_i)[k_i(u^{n,2}_i-u^{n,1}_i) +\eta \sum_{j\ne i} k_j(u^{n,2}_j-u^{n,1}_j)]=:T'_{b,n,2} V^n_{i},\\[1ex]
			v^n(0,x)&=T_b\big(u_i^n(0,x)+\eta U^n_i(0,x)\big).
			\end{array}
			\end{array}
			\right.
			\end{equation}}
		Consequently, we may write for all $i=1,...,m$, for all $\psi\in \mathcal{D}_i,$
		$$-\int_\Omega \psi(0) v^n(0)-\int_{Q_T} \psi_t v^n -\int_0^T\int_\Gamma D_i(\psi^1\partial_{\boldsymbol{n}^1} v^{n,1}-\psi^2\partial_{\boldsymbol{n}^1}v^{n,2})+D_i\int_{Q_T}\nabla v^n \nabla \psi $$$$\geq \int_{Q_T}(R^n_i+\eta S^n_i)\psi , $$
		$$-\int_\Omega \psi(0) v^n(0)+\int_{Q_T} (-\psi_t v^n+D_i\nabla v^n \nabla \psi)-\int_0^T\int_\Gamma D_i V^n_{i}(\psi^1 T'_{b,n,1}-\psi^2 T'_{b,n,2})$$
		\begin{equation}\label{soldebv}
		\geq \int_{Q_T}(R^n_i+\eta S^n_i)\psi. 
		\end{equation}
		So, as we said, the truncated function is a supersolution but with reaction terms (see the following) converging in $L^1$ or bounded independently from $n.$\\[1ex]
		$\bullet${ \it Limit for $n\rightarrow +\infty$ with $b,\eta$ fixed.} \\[1ex]
		\indent Since $\boldsymbol{u}^n$ was a convergent solution (see \eqref{convsol}) and $T_b(W^n_i)$ represents the truncation at level $b$ with $b$ fixed, by the dominated convergence theorem,
		$$v^n=T_b(W^n_i) \stackrel{n\rightarrow +\infty}\longrightarrow T_b(W_i)=T_b(u_i+\eta U_i)\;\;\mbox{ in } L^1(Q_T) \mbox{ and a.e.}.$$
		Since $T'_b(\sigma)=0$ for $\sigma>b$, by definition, it holds $R^n_i=0$ on the set $u_i^n+\eta U^n_i>b$. But on $u_i^n+\eta U^n_i \leq b, $  for $s=1,...,m,\;$ $u^n_s$ are uniformly bounded. In fact,
		\begin{equation}\label{unibound}
		u_i^n\leq b \mbox{ and } u^n_j\leq \frac{b}{\eta}, \;\; \forall j\ne i.
		\end{equation}
		By the dominated convergence theorem, using \eqref{hpF1}, we find
		$$R^n_i \stackrel{n\rightarrow \infty}\longrightarrow R_i:=T'_b(u_i+\eta U_i)[f_i+\eta \sum_{j\ne i}f_j]\;\; \mbox{ in } L^1(Q_T).$$
		On the other hand, we remark that
		$$\nabla v^n=\nabla T_b(W^n_i)=T'_b(u_i^n+\eta U^n_i)[\nabla u_i^n+\eta \nabla U^n_i]\rightharpoonup \nabla v=T'_b(u_i+\eta U_j)[\nabla u_i+\eta\nabla U_j]\;\; \mbox{ in } L^1(Q_T)$$
		and we have also convergence of the traces on $\Gamma$ and $\Gamma^\lambda, \; \lambda=1,2$.
		Therefore, to pass to the limit as $n\rightarrow +\infty$ in \eqref{soldebv}, we only need to control $\int_{Q_T} \psi S^n_i.$
		We have (\hyperlink{proofcontrolS}{see the proof later on})
		
		\begin{lemma}(\cite{pierre})\label{controlS}
			There exists C depending on $b,\psi$ and the data, but not on $n$, $\eta \in (0,1)$ such that
			$$
			\left|\int_{Q_T}\psi\; S^n_i\right|\leq C\eta^{-\frac{1}{2}}.
			$$
		\end{lemma}
		
		\noindent So we can pass to the limit as $n\rightarrow +\infty$ in \eqref{soldebv} with $b,\eta$ fixed and we obtain
		$$-\int_\Omega \psi(0) v(0)+\int_{Q_T} (-\psi_t v+D_i\nabla v \nabla \psi)-\int_0^T\int_\Gamma D_i V_{i}(\psi^1 T'_b(W^{1}_i)-\psi^2 T'_b(W^{2}_i))$$
		$$\geq \int_{Q_T}R_i\psi+\eta \int_{Q_T} S^n_i\psi \geq \int_{Q_T}R_i\psi - C\eta^\frac{1}{2},$$
		with $V_{i}= [k_i(u^2_i-u^1_i)+\eta\sum_{j\ne i} k_j(u^2_j-u^1_j)]$.\\[1ex]
		\noindent $\bullet${ \it Limit for $\eta\rightarrow 0$ with $b$ fixed.}  Then, $W_i \rightarrow u_i, \;$ $V_{i}\rightarrow b_i(u^2_i-u^1_i)$ and $R_i\rightarrow T'_b(u_i) f_i$.\\[1ex]
		\noindent $\bullet${ \it Limit for $b\rightarrow +\infty$.} Then, the truncation is converging to the function itself and its derivative to $1$ and so we obtain the statement \eqref{supersol}:
		$$-\int_{\Omega} \psi(0,x)u_{0,i}+\int_{Q_T} (-\psi_t u_i+D_i\nabla\psi\nabla u_i)+\int_0^T\int_{\Gamma}D_i k_i[u_i][\psi]\geq \int_{Q_T} \psi f_i.$$
	\end{proof}
	We now turn to the \hypertarget{proofcontrolS}{{\bf proof of Lemma~\ref{controlS}}}.
	\begin{proof}
		Remembering \eqref{def:RS}, 
		in order to prove Lemma~\ref{controlS}, we need that
		$$\left|\int_{Q_T} \psi T'_b (W^n_i)\sum_{j\ne i}(D_j-D_i)\Delta u^n_j\right|\leq C\eta^{-\frac{1}{2}}.$$
		Consequently, we have to study the following integral
		$$
		\begin{array}{ll}
		\int_{Q_T} \psi T'_b(W^n_i)\Delta u^n_j &=
		\int_{Q_T} \mbox{div}(\psi T'_b(W^n_i)\nabla u^n_j)-\int_{Q_T} \mbox{div}(\psi T'_b(W^n_i))\nabla u^n_j\\[1ex]
		&= \int_0^T\int_{\Gamma} (\psi^1 T'_{b}(W^{n,1}_i) \partial_{\boldsymbol{n}^1} u^{n,1}_j+\psi^2 T'_b(W^{n,2}_i)\partial_{\boldsymbol{n}^2} u^{n,2}_j)\\[1ex]
		&- \int_{Q_T} [T'_b(W^n_i) \nabla \psi+\psi T''_b(W^n_i) \nabla W^n_i]\nabla u^n_j.
		\end{array}
		$$
		We remark that
		$$\left|\int_0^T\int_{\Gamma} (\psi^1 T'_{b}(W^{n,1}_i) \partial_{\boldsymbol{n}^1} u^{n,1}_j+\psi^2 T'_b(W^{n,2}_i)\partial_{\boldsymbol{n}^2} u^{n,2}_j)\right| \leq C,$$
		$$\left|\int_{Q_T} T'_b(W^n_i) \nabla \psi\nabla u^n_j\right| \leq C,$$
		since $\psi^\lambda\in C^{\infty}([0,T]\times \overline{\Omega^\lambda})$ for $\lambda=1,2$, $|T'_b|\leq 1$ and, thanks to Lemma~\ref{compreglemma}, $u_j^n \in L^1\big(0,T;W^{1,1}(\Omega)\big)$ and it is $L^1$ on the membrane.
		The other integral can be computed using the Cauchy-Schwarz inequality and considering the cases $\{W^n_i\leq b\}$ and $\{W^n_i>b\}$ in $Q_T$:
		$$\left|\int_{Q_T} \psi T''_b(W^n_i) \nabla W^n_i\nabla u^n_j\right| =\left|\int_{\{W^n_i\leq b\}\cup \{W^n_i>b\}} \psi T''_b(W^n_i) \nabla W^n_i\nabla u^n_j\right|=$$
		$$= \left|\int_{\{W^n_i\leq b\}} \psi T''_b(W^n_i) \nabla W^n_i\nabla u^n_j\right|\leq C \left(\int_{\{W^n_i\leq b\}}|\nabla u^n_j|^2\right)^\frac{1}{2} \left(\int_{\{W^n_i\leq b\}}|\nabla W^n_i|^2\right)^\frac{1}{2},$$
		since $T'_b(\sigma)=0$ for $\sigma>b$, by definition, and so also $T''_b(\sigma)=0.$
		In order to control the second integral in the right-hand side, we can use the lemma (\hyperlink{proofcontrolS2}{see the proof later on}):
		\begin{lemma}\label{controlS2}(\cite{pierre})
			Let $w$ be solution of \eqref{pb21}. Then, for all $b>0$,
			\begin{equation}
			D\int_{\{|w|\leq b\}} |\nabla w|^2 \leq b\left[\int_{Q_T} f +\int_{\Omega} |w_0|\right].
			\end{equation}
		\end{lemma}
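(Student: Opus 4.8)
The plan is to test equation \eqref{pb21} against the truncation $T_b(w)$ itself, in the spirit of Pierre's method, and to exploit the monotonicity of $T_b$ to discard the membrane contribution. I keep the $C^2$-regularized $T_b$ introduced above (so that every manipulation is licit), recalling $0\le T_b'\le 1$, $T_b'=1$ on $\{|w|\le b\}$ and $0\le T_b(w)\le b$. First I would multiply $\partial_t w-D\Delta w=f$ by $T_b(w)$ and integrate over $\Omega$. Writing $\Theta_b(\sigma):=\int_0^\sigma T_b(s)\,ds$ for the primitive, the time term becomes $\frac{d}{dt}\int_\Omega\Theta_b(w)$. Integrating the diffusion term by parts in ${\bf H^1}$ (legitimate, as recalled in Subsection~\ref{prelim}) and using $\nabla T_b(w)=T_b'(w)\nabla w$ gives
$$\frac{d}{dt}\int_\Omega\Theta_b(w)+D\int_\Omega T_b'(w)|\nabla w|^2-D\int_{\partial\Omega}\partial_n w\,T_b(w)=\int_\Omega f\,T_b(w).$$

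The heart of the argument is the boundary term $\int_{\partial\Omega}\partial_n w\,T_b(w)$. On $\Gamma^1\cup\Gamma^2$ one has $w=0$, hence $T_b(w)=0$ and no contribution. On $\Gamma$ I would split the outward-normal fluxes of $\Omega^1$ and $\Omega^2$, use $\boldsymbol{n}^2=-\boldsymbol{n}^1$, and substitute the Kedem--Katchalsky condition $\partial_{\boldsymbol{n}^1}w^1=\partial_{\boldsymbol{n}^1}w^2=k(w^2-w^1)$; the membrane integral then collapses to $-k\int_\Gamma(w^2-w^1)\big(T_b(w^2)-T_b(w^1)\big)$, which is $\le 0$ since $T_b$ is nondecreasing. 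Thus $-D\int_{\partial\Omega}\partial_n w\,T_b(w)\ge 0$ and this term can be dropped. Keeping only the part of the gradient term supported on $\{|w|\le b\}$, where $T_b'=1$, and discarding the rest (which is nonnegative because $T_b'\ge 0$), I obtain the differential inequality
$$\frac{d}{dt}\int_\Omega\Theta_b(w)+D\int_{\{|w|\le b\}}|\nabla w|^2\le\int_\Omega f\,T_b(w).$$

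To conclude I would integrate in time over $[0,T]$. Since $\Theta_b\ge 0$, the terminal contribution $\int_\Omega\Theta_b(w(T))$ is nonnegative and is simply discarded; the initial contribution is controlled by $\int_\Omega\Theta_b(w_0)\le b\int_\Omega|w_0|$, because $0\le T_b\le b$ forces $\Theta_b(\sigma)\le b\,\sigma$. On the right-hand side, $0\le T_b(w)\le b$ yields $\int_{Q_T}f\,T_b(w)\le b\int_{Q_T}f$ (and $\le b\int_{Q_T}|f|$ in the general signed case). Collecting these bounds produces exactly the claimed estimate
$$D\int_{\{|w|\le b\}}|\nabla w|^2\le b\left[\int_{Q_T}f+\int_\Omega|w_0|\right].$$

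The only genuinely delicate points I expect are the rigorous justification of the integration by parts across the membrane and of the identity $\frac{d}{dt}\int_\Omega\Theta_b(w)=\int_\Omega\partial_t w\,T_b(w)$ at the mere $L^1$ regularity available for $(w,f,w_0)$. As elsewhere in the paper, I would handle this by performing the computation first on the smooth regularized solutions $w^n$, for which all terms are classical, and then passing to the limit; the crucial sign of the membrane term, being a consequence of the monotonicity of $T_b$ alone, is stable under this passage, so the inequality survives in the limit.
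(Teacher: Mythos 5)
Your proposal is correct and follows essentially the same route as the paper: multiplying by $T_b(w)$, identifying the time term with the primitive of $T_b$, observing that the membrane contribution reduces to $-k\int_\Gamma (w^2-w^1)\bigl(T_b(w^2)-T_b(w^1)\bigr)$ and has the right sign by monotonicity of $T_b$, and bounding $T_b\le b$ on both the initial and source terms. The only cosmetic differences are that the paper uses the non-regularized truncation directly (so $T_b'$ is exactly the indicator of $[0,b]$) and integrates over $Q_T$ at once rather than deriving a differential inequality first.
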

		
		\noindent Applying Lemma~\ref{controlS2} and considering \eqref{unibound}, we infer
		$$ \left(\int_{\{W^n_i\leq b\}}|\nabla W^n_i|^2\right)^\frac{1}{2}\leq C.$$
		Concerning the first integral at the right-hand side, we remark that
		$$\left(\int_{\{W^n_i\leq b\}}|\nabla u^n_j|^2\right)^\frac{1}{2} = \left(\int_{\left\{U^n_i\leq \frac{b}{\eta}-\frac{u_i^n}{\eta}\right\}}|\nabla u^n_i|^2\right)^\frac{1}{2}\leq \left(\int_{\left\{u^n_j\leq \frac{b}{\eta}\right\}}|\nabla u^n_i|^2\right)^\frac{1}{2}\leq \frac{b^\frac{1}{2}}{\eta^\frac{1}{2}}C^\frac{1}{2} \;\mbox{ for } i\neq j,$$
		
		$$\left(\int_{\{W^n_j\leq b\}}|\nabla u_j^n|^2\right)^\frac{1}{2} = \left(\int_{\left\{u_j^n\leq b-\eta U^n_j\right\}}|\nabla u_j^n|^2\right)^\frac{1}{2}\leq \left(\int_{\left\{u_j^n\leq b\right\}}|\nabla u^n_j|^2\right)^\frac{1}{2}\leq (b C)^\frac{1}{2}.$$
		This concludes the proof of Lemma~\ref{controlS}.
		
	\end{proof}
	We now turn to the \hypertarget{proofcontrolS2}{{\bf proof of Lemma~\ref{controlS2}}}.
	\begin{proof}
		We multiply the Equation~\eqref{pb21} by a truncation (non regularized) function $T_b(w)$ and integrate over $Q_T$ to obtain
		$$\int_{Q_T} T_b(w)\partial_t w- \int_{Q_T} D T_b(w) \Delta w = \int_{Q_T} T_b(w) f,$$
		
		$$\int_{\Omega} \int_{w_0}^{w(T)}T_b(w) dw -  \int_0^T\int_{\Gamma}D [T_b(w^1)\partial_{\boldsymbol{n}^1}w^1 + T_b(w^2)\partial_{\boldsymbol{n}^2}w^2]+ \int_{Q_T} D T'_b(w) |\nabla w|^2 = \int_{Q_T} T_b(w) f.$$
		
		\noindent We denote the antiderivative of $T_b$ as $\mathcal{T}(\sigma)=\int_{0}^{\sigma} T_b(s) ds$. So, we compute
		\begin{align*}
		&\int_{\Omega} \int_{w_0}^{w(T)}T_b(w) dw = \int_{\Omega} \mathcal{T}\big(w(T)\big) - \int_{\Omega} \mathcal{T}(w_0),\\[1ex]
		&- \int_0^T\int_{\Gamma}D [T_b(w^1)\partial_{\boldsymbol{n}^1}w^1 + T_b(w^2)\partial_{\boldsymbol{n}^2}w^2] =  \int_0^T\int_{\Gamma}D\; k\; (w^2-w^1)(T_b(w^2)- T_b(w^1)) \geq 0.
		\end{align*}
		
		\noindent Since $\int_{\Omega} \mathcal{T}\big(w(T)\big) \geq 0$ and $T_b(w) \leq b$, we deduce 
		$$D\int_{\{|w|\leq b\}} |\nabla w|^2 \leq b\left[\int_{Q_T} f +\int_{\Omega} |w_0|\right].$$
		This concludes the proof of Lemma~\ref{controlS2}.
		
	\end{proof}
	%%%%%%%%%%%%%%%%%%%%
	\subsection{Global existence of a weak solution}\label{step4}
	%%%%%%%%%%%%%%%%%%%%%%%%%%%%%%%%%
	We conclude the proof of Theorem~\ref{thmprinc}. As before, we consider the approximate system as built in Subsection~\ref{regpb}. Following the previous Theorem~\ref{thmsupersol}, we prove that the supersolution \eqref{supersol} is also a subsolution and, then, a solution of our System \eqref{model}. 
	%%%%%%%%%%%%%%%%%%%%%%%%%%
	\begin{thm}\label{thmsol}
			We consider System \eqref{model} together with the conditions on the reaction term \eqref{hpF0}-\eqref{hpF4}
			and $\boldsymbol{u}_0 \in (L^1(\Omega)^+ \cap ({\bf H^1})^*)^m.$ Moreover, we take $k_1=...=k_m$.	
			Then, System \eqref{model} has a weak solution on $(0,+\infty).$
	\end{thm}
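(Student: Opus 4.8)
The plan is to upgrade the supersolution produced in Theorem~\ref{thmsupersol} to a genuine solution by showing that the weak inequality \eqref{supersol} is in fact an equality for every $i$ and every $\psi\in\mathcal{D}_i$. The whole argument rests on the summation trick of Pierre's method, combined with the mass-control assumption \eqref{hpF1}; this is precisely the step where the hypothesis $k_1=\dots=k_m=:k$ is essential, because it forces all the test-function spaces $\mathcal{D}_i$ to coincide with a single space $\mathcal{D}$, so that one common $\psi\in\mathcal{D}$ may be used simultaneously for all species when the relations are added over $i$.

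For $\psi\in\mathcal{D}$, $\psi\geq0$, I would introduce the linear functional
\[
A_i[\psi]:=-\int_{\Omega}\psi(0,x)u_{0,i}+\int_{Q_T}\bigl(-\psi_t u_i+D_i\nabla\psi\nabla u_i\bigr)+\int_0^T\!\!\int_{\Gamma}D_i k\,[u_i][\psi],
\]
so that the supersolution property \eqref{supersol} reads $A_i[\psi]-\int_{Q_T}\psi f_i(\boldsymbol u)\geq0$ for each $i$. The same functional built on the approximate solution, $A_i^n[\psi]$, satisfies the \emph{exact} identity \eqref{nweaksol}, namely $A_i^n[\psi]=\int_{Q_T}\psi f_i^n(\boldsymbol u^n)$. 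First I would pass to the limit $n\to+\infty$ in each $A_i^n[\psi]$: by the convergences \eqref{convsol} (strong $L^1$ convergence of $u_i^n$ and of the traces, weak convergence of $\nabla u_i^n$) together with ${\bf u}^n_0\to{\bf u}_0$ in $L^1(\Omega)$, one gets $A_i^n[\psi]\to A_i[\psi]$. Summing the exact identities over $i$ then shows that $\int_{Q_T}\psi\sum_i f_i^n(\boldsymbol u^n)$ converges, with limit $\sum_i A_i[\psi]$.

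The heart of the matter is to control this limit from above by $\int_{Q_T}\psi\sum_i f_i(\boldsymbol u)$. Here I would invoke the mass control \eqref{hpF1}, which also holds for $\boldsymbol f^n$: since $\psi\geq0$, the functions $h_n:=\psi\sum_i f_i^n(\boldsymbol u^n)$ are dominated \emph{from above} by $g_n:=C\psi\bigl(1+\sum_i u_i^n\bigr)$, where $g_n\to g:=C\psi(1+\sum_i u_i)$ in $L^1(Q_T)$ by \eqref{convsol}, while $h_n\to h:=\psi\sum_i f_i(\boldsymbol u)$ a.e. Applying Fatou's lemma to the nonnegative sequence $g_n-h_n$ and using $\int g_n\to\int g$ yields the one-sided bound $\limsup_n\int_{Q_T}h_n\leq\int_{Q_T}h$. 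Combined with the previous step this gives $\sum_i A_i[\psi]\leq\int_{Q_T}\psi\sum_i f_i(\boldsymbol u)$; together with the supersolution inequalities $A_i[\psi]\geq\int_{Q_T}\psi f_i(\boldsymbol u)$ summed over $i$, we obtain $\sum_i\bigl(A_i[\psi]-\int_{Q_T}\psi f_i(\boldsymbol u)\bigr)=0$. As each summand is nonnegative, every one of them vanishes, i.e. $A_i[\psi]=\int_{Q_T}\psi f_i(\boldsymbol u)$ for all $i$ and all $\psi\in\mathcal{D}=\mathcal{D}_i$; after integrating $D_i\nabla\psi\nabla u_i$ by parts against the membrane condition of $\mathcal{D}_i$, this is exactly the weak-solution identity \eqref{weaksol}. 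Finally, since all a priori bounds (the $L^2$ estimate of Lemma~\ref{lemmalp} and the regularity of Lemma~\ref{compreglemma1}) grow at most exponentially in $T$ and hold on every finite interval, the construction is valid for arbitrary $T>0$, giving a weak solution on $(0,+\infty)$.

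I expect the main obstacle to be the passage to the limit in the reaction term, namely establishing $\limsup_n\int_{Q_T}\psi\sum_i f_i^n(\boldsymbol u^n)\leq\int_{Q_T}\psi\sum_i f_i(\boldsymbol u)$: this is where a.e.\ convergence alone is insufficient and the mass-control structure \eqref{hpF1} must be exploited through the one-sided domination. A secondary technical point is to check that the membrane boundary terms $\int_0^T\int_\Gamma D_i k[u_i][\psi]$ add up and pass to the limit correctly, which again relies on $k_i=k$ and on the strong trace convergence in \eqref{convsol}.
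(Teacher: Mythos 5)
Your proposal is correct and follows essentially the same route as the paper: the paper likewise sums the approximate weak identities over $i$ (introducing $W^n=\sum_i u_i^n$, $Z^n=\sum_i D_iu_i^n$, $V^n=\sum_i D_ik_i[u_i^n]$), applies Fatou's lemma to the nonnegative quantity $C\psi(1+W^n)-\psi\sum_i f_i^n(\boldsymbol u^n)$ using the strong $L^1$ convergence of $W^n$, and concludes that the summed limit is a subsolution as well as a supersolution, hence each individual supersolution gap must vanish. Your functional notation $A_i[\psi]$ merely makes explicit the final "sum of nonnegative terms equals zero" step that the paper states in words.
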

	%%%%%%%%%%%%%%%%%%%%%%%%%%%%%%%5
	\begin{proof}
		By Theorem~\ref{thmsupersol}, up to a sub-sequence, the approximate solution $\boldsymbol{u}^n $ converges to a weak supersolution. Let us prove that it is also a weak subsolution. We recall some results obtained before:
		$$
		\left\{\begin{array}{l l l}
		\boldsymbol{u}^n \rightarrow \boldsymbol{u}, & \mbox{ in } L^1\big(0,T;L^{\gamma_1}(\Omega))^m, & \forall \gamma_1\in \Big[1,\frac{d}{d-2}\Big),  \\[1ex]
		\nabla \boldsymbol{u}^n \rightharpoonup \nabla \boldsymbol{u}, & \mbox{ in } [L^{\beta}(Q_T)^d]^m, & \forall \beta \in \left[1,\frac{d}{d-1}\right),\\[1ex]
		Tr_\Gamma(\boldsymbol{u}^n) \rightarrow Tr_\Gamma(\boldsymbol{u}), & \mbox{ in } L^1\big(0,T;L^{\beta}(\Gamma)\big)^m, & \forall \beta \in \left[1,\frac{d}{d-1}\right),
		\end{array}
		\right.
		$$
		where for $i=1,...,m$, $f_i(\boldsymbol{u})\in L^1(Q_T)$ and $\forall \psi\in\mathcal{D}_i$, we have \eqref{supersol}.
		We introduce the following notations:
		$$\begin{array}{l l l}
		W^n=\displaystyle\sum_{1\leq i\leq m} u^n_i,\;\;  &Z^n = \displaystyle\sum_{1\leq i\leq m} D_i u^n_i,\;\;  &V^n =\displaystyle\sum_{1\leq i\leq m} D_i k_i(u^{n,2}_i-u^{n,1}_i),\\[3ex]
		W=\displaystyle\sum_{1\leq i\leq m} u_i, &Z=\displaystyle\sum_{1\leq i\leq m} D_i u_i, &V =\displaystyle\sum_{1\leq i\leq m} D_i k_i(u^2_i-u^1_i).
		\end{array}$$
		Adding up the  equations for $u^n_i$, for $i=1,...,m$, in the weak form, we deduce
		\begin{equation}\label{sommasol}
		-\int_{\Omega}
		\psi(0)W^n_{0}+\int_{Q_T} (-\psi_t W^n+\nabla\psi\nabla Z^n)+\int_0^T\int_{\Gamma} [V^n][\psi] = \int_{Q_T} \psi \displaystyle\sum\limits_{1\leq i\leq m} f^n_i.  
		\end{equation}
		Since we have assumed \eqref{hpF1}, -\(\sum_{1\leq i\leq m} f_i^n + C(1+W^n)\geq 0\), with  $\boldsymbol{f}^n(\boldsymbol{u}^n)\rightarrow \boldsymbol{f}(\boldsymbol{u})$ a.e. in $Q_T$ and $W^n$ converges in $L^1(Q_T)$. Applying Fatou's lemma on $-\sum\limits_{1\leq i\leq m} f_i^n + C(1+W^n)\geq 0$, we infer
		$$\int_{Q_T} -\psi\sum\limits_{1\leq i\leq m} f_i(\boldsymbol{u})\leq \liminf_{n\rightarrow+\infty} \int_{Q_T} -\psi\sum\limits_{1\leq i\leq m} f_i^n(\boldsymbol{u}^n).$$
		By a.e convergence of all functions, by $L^1(Q_T)$-convergence of $W^n$ and by Fatou's lemma, we have at the limit for \eqref{sommasol} that
		$$ -\int_{\Omega}
		\psi(0)W_{0}+\int_{Q_T} (-\psi_t W+\nabla\psi\nabla Z)+\int_0^T\int_{\Gamma} [V][\psi] \leq \int_{Q_T} \psi \sum\limits_{1\leq i\leq m} f_i.$$
		Consequently, $W$ is not only a supersolution but also a subsolution. This means that the sum $W$ is a solution and, since its addends $u_i$ are weak supersolutions by Theorem~\ref{thmsupersol}, $\boldsymbol{u}$ is a global weak solution and the proof is completed.
	\end{proof}
	
	Finally, following all the four steps of the proof (from Subsection~\ref{regpb} to Subsection~\ref{step4}), we have proved Theorem~\ref{thmprinc} in the case of interest with quadratic nonlinearities. We point out that this result, as well as Theorem~\ref{thmsupersol}~and~\ref{thmsol}, needs the restricted assumption $k_1=...=k_m$, since it arises in Subsection~\ref{step2}. As said before, we leave as an open problem to remove this restriction. It would also be interesting to see if the method in \cite {CDF2014} can be applied to nearly constant membrane coefficients rather than to the diffusion coefficients. Another open problem, previously introduced, concerns the geometry of the domain. In fact, as we can see in \cite{BCF,LiWang, wang}, we could consider the membrane as the boundary of the domain $\Omega^2$ which is included in $\Omega^1= \Omega \setminus \Omega^2$.

	\section{Declarations	}
	
	{\bf Funding.}  
	The authors have received funding from the European Research Council (ERC) under the European Union's Horizon 2020 research and innovation programme (grant agreement No 740623). The work of G.C. was also partially supported by GNAMPA-INdAM.
	\\[1mm]
	{\bf Conflict of Interest/Declaration of Competing Interest.}
	Not applicable.
	\\[1mm]
	{\bf Availability of data and material.} 
	Not applicable. 
	\\[1mm]
	{\bf Code availability.} 
	Not applicable. 
	\\[1mm]
	{\bf Authors' Contributions.} 
	The two authors have equal contributions.

	%%%%%%%%%%%%%%%%%%%%%%%%%%%%%%%%%%%%%%%%%%%%%%%%%%%%
	\appendix 
	%------------------	
	%%%%%%%%%%%%%%%%%%%%
	
	\section{Regularity} 
	\label{appendix_reg}
	
	We now analyse in detail  regularity in our problem referring to Lemma~\ref{compreglemma1} that we have rewritten here below, whereas in the next Appendix, we discuss about compactness. We extend previous results for reaction-diffusion systems without membrane \cite{baras, bothe, lp, lpi, pierre} and we refer to~\cite{QuittnerSouplet} for the general theory of parabolic equations.  We also refer to \cite{lpi} for a regularity lemma.
	
	\begin{lemma} [A priori bounds] \label{compreglemma}
		We consider $w$ solution of the following problem in dimension $d\geq 2$
		\begin{equation}\label{pb2}
		\left\{\begin{array}{ll}
		\partial_t w-D\Delta w=f,& \mbox{in}\; Q_T,\\[1ex]
		w=0,& \mbox{in}\; \Sigma_T,\\[1ex]
		\partial_{\boldsymbol{n}^{1}} w^{1}=\partial_{\boldsymbol{n}^{1}} w^2=k(w^2-w^{1}), & \mbox{in}\; \Sigma_{T,\Gamma},\\[1ex]
		w(0,x)=w_0(x)\geq 0, & \mbox{in}\; \Omega,
		\end{array}
		\right.  
		\end{equation}
		with $f\in L^1(Q_T)$ and $w_0\in L^1(\Omega)$. Then, 
		\begin{itemize}
			\item $w \in L^{\beta}\big(0,T;W^{1,\beta}(\Omega)\big), \; \forall \beta \in \left[1,\frac{d}{d-1}\right)$ and $(1+|w|)^\alpha \in L^2\big(0,T;H^1(\Omega)\big)\; \mbox{ for } \alpha \in \left[0,\frac{1}{2}\right)$.
			\item The mapping $(w_0,f)\longmapsto w$ is compact from $L^1(\Omega)\times L^1(Q_T)$ into $L^1\big(0,T;L^{\gamma_1} (\Omega)\big)$, for all  $\gamma_1<\frac{d}{d-2}$ and $L^{\gamma_2}(Q_T)$ for all   ${\gamma_2}  < \frac{2+d}{d}$.
			\item The trace mapping $(w_0,f)\longmapsto Tr_\Gamma(w)\in L^\beta\big(0,T;L^\beta(\Gamma)\big),\; \beta \in \left[1,\frac{d}{d-1}\right)$ is also compact.
			
		\end{itemize}
	\end{lemma}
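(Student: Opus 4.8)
The plan is to build the estimates from two foundational bounds and then bootstrap them through weighted Hölder inequalities and interpolation, working separately on the two subdomains $\Omega^1,\Omega^2$ and keeping the membrane term on its favourable side throughout. The first foundational bound is the mass control $w\in L^\infty(0,T;L^1(\Omega))$: integrating \eqref{pb2} over $\Omega$, the two membrane fluxes cancel by the conservation structure $\partial_{\boldsymbol{n}^1}w^1+\partial_{\boldsymbol{n}^2}w^2=0$ on $\Gamma$, while the Dirichlet parts on $\Gamma^\lambda$ carry the good sign when $w\ge 0$, so that $\frac{d}{dt}\int_\Omega w\le\int_\Omega f$ (made rigorous for $L^1$ data by first truncating and then letting the level go to infinity). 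The second is precisely the truncated energy estimate of Lemma~\ref{controlS2}, $D\int_{\{|w|\le b\}}|\nabla w|^2\le b\big(\|f\|_{L^1(Q_T)}+\|w_0\|_{L^1(\Omega)}\big)$, whose proof already exhibits the membrane contribution $\int_\Gamma k(w^2-w^1)(T_b(w^2)-T_b(w^1))$ as nonnegative, because $T_b$ is nondecreasing.

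From these I would first obtain $(1+|w|)^\alpha\in L^2\big(0,T;H^1(\Omega)\big)$ for every $\alpha\in[0,\tfrac12)$. Decomposing $Q_T$ over the dyadic level sets $\{2^j\le 1+|w|<2^{j+1}\}$ and applying Lemma~\ref{controlS2} on each, the weight $(1+|w|)^{2\alpha-2}$ converts $\int|\nabla w|^2$ into a convergent geometric series whenever $2\alpha-1<0$, so that $\int_{Q_T}(1+|w|)^{2\alpha-2}|\nabla w|^2=\tfrac{1}{\alpha^2}\int_{Q_T}|\nabla(1+|w|)^\alpha|^2<\infty$; the full $H^1$ norm is then recovered from the membrane Poincaré inequality (Theorem~\ref{poin}, Appendix~\ref{appendix_inequal}). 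The Sobolev embedding $H^1(\Omega^\lambda)\hookrightarrow L^{2^*}(\Omega^\lambda)$, $2^*=\tfrac{2d}{d-2}$, applied to $(1+|w|)^\alpha$ gives $w\in L^1\big(0,T;L^{\gamma_1}(\Omega)\big)$ for $\gamma_1<\tfrac{d}{d-2}$, and interpolating this against $L^\infty\big(0,T;L^1(\Omega)\big)$ along the parabolic scaling line yields $w\in L^{\gamma_2}(Q_T)$ for $\gamma_2<\tfrac{d+2}{d}$, which are the two target spaces for the boundedness side of the compactness claim.

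For the gradient bound I would argue at almost every fixed time: since $(1+|w(t)|)^\alpha\in H^1(\Omega^\lambda)\hookrightarrow L^{2^*}(\Omega^\lambda)$ for a.e.\ $t$, the spatial Hölder inequality $\int_\Omega|\nabla w|^\beta\le\big(\int_\Omega(1+|w|)^{2\alpha-2}|\nabla w|^2\big)^{\beta/2}\big(\int_\Omega(1+|w|)^{(2-2\alpha)\beta/(2-\beta)}\big)^{(2-\beta)/2}$ is finite provided $(2-2\alpha)\beta/(2-\beta)\le\alpha\,2^*$, a condition which, as $\alpha\uparrow\tfrac12$, reduces exactly to $\beta<\tfrac{d}{d-1}$; integrating in time against the $L^2(0,T;H^1)$ bound places $w$ in $L^\beta\big(0,T;W^{1,\beta}(\Omega)\big)$ in the stated range. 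This exponent bookkeeping — balancing the weighted gradient control against the integrability of $w$ — is the point requiring the most care. The trace bound $Tr_\Gamma(w)\in L^\beta\big(0,T;L^\beta(\Gamma)\big)$ then follows by applying the membrane trace inequality of Appendix~\ref{appendix_inequal} on each $\Omega^\lambda$ and combining it with the spatial $W^{1,\beta}$ control just obtained.

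Finally, compactness is where the membrane structure is most delicate, and I expect it to be the main obstacle. Since $w$ jumps across $\Gamma$ and is therefore not in $W^{1,1}(\Omega)$ globally, I would work componentwise: on each subdomain $w^\lambda$ is bounded in $L^\beta\big(0,T;W^{1,\beta}(\Omega^\lambda)\big)$, and reading $\partial_t w=D\Delta w+f$ in the weak (membrane) formulation bounds $\partial_t w^\lambda$ in $L^1(0,T;Z)$ for a negative-order space $Z$ into which the compact embedding $W^{1,\beta}(\Omega^\lambda)\hookrightarrow\hookrightarrow L^{\gamma}(\Omega^\lambda)$ continuously injects. The Aubin--Lions--Simon lemma then gives relative compactness of $\{w^\lambda\}$ in $L^1\big(0,T;L^{\gamma_1}(\Omega^\lambda)\big)$ and in $L^{\gamma_2}\big((0,T)\times\Omega^\lambda\big)$, and recombining the two pieces yields the stated compactness of $(w_0,f)\mapsto w$. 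For the trace, the same bound on $\partial_t w$ supplies time-equicontinuity while the compactness of the trace operator $W^{1,\beta}(\Omega^\lambda)\to L^\beta(\Gamma)$ supplies the spatial compactness, so Aubin--Lions--Simon applies once more (see Appendix~\ref{appendix_compact}). The recurring technical requirement, and the crux of adapting Pierre's argument to this setting, is to check at each integration by parts that the membrane term retains its favourable sign or is absorbed, which is guaranteed by testing against monotone functions of the jump $[w]$.
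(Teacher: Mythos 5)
Your proposal reaches the same chain of estimates as the paper but by genuinely different means at the two key junctures, and both routes are viable. For the weighted gradient bound \eqref{phi1}, the paper multiplies the equation by the bounded increasing function $w/(1+|w|^{1/\mu})^{\mu}$ and integrates by parts, monotonicity making the membrane term $\int_\Gamma(\cdot)\,k(w^2-w^1)$ nonpositive; you instead sum the truncated energy estimate of Lemma~\ref{controlS2} over dyadic level sets $\{2^j\le 1+|w|<2^{j+1}\}$, which is an equivalent derivation (the geometric series $\sum_j 2^{j(2\alpha-1)}$ converges exactly for $\alpha<\tfrac12$) and has the merit of reusing a lemma already proved, with the membrane sign handled once and for all there. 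For compactness, the paper gives a direct Fr\'echet--Kolmogorov argument (mollify in space, use the equation to rewrite time translates as $\int_t^{t+h}(Dw\ast\Delta\varphi_\delta+f\ast\varphi_\delta)$, choose $\delta=h^{1/4}$), whereas you invoke Aubin--Lions--Simon on each subdomain with $\partial_t w^\lambda$ read in $L^1(0,T;Z)$ for a negative-order space $Z$; this is a legitimate and arguably cleaner alternative, though for the trace you should route it through compactness in $L^\beta\big(0,T;W^{s,\beta}(\Omega^\lambda)\big)$ for some $\tfrac1\beta<s<1$ and then use trace continuity, since Aubin--Lions is stated for embeddings rather than for the (non-injective) trace operator. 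You are also more explicit than the paper about the $L^\infty\big(0,T;L^1(\Omega)\big)$ mass bound, which the paper uses silently in its interpolation step.

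One caveat on your gradient step: as written, requiring $(2-2\alpha)\beta/(2-\beta)\le\alpha 2^*$ and then ``integrating in time against the $L^2(0,T;H^1)$ bound'' does not close at the endpoint. Indeed $\|(1+|w|)^{\alpha}\|_{L^{2^*}(\Omega)}$ is only controlled in $L^2(0,T)$, so with $q=\alpha 2^*$ the second H\"older factor raised to the power $(2-\beta)/2$ fails to be time-integrable against the conjugate exponent $2/(2-\beta)$; and a pure space--time H\"older against the $L^{\gamma_2}(Q_T)$ bound only yields $\beta<(d+2)/(d+1)$, short of $d/(d-1)$. To reach the stated range you must, as the paper does in its ``Regularity of $\nabla w$'' step, first interpolate the second factor with the uniform-in-time $L^1(\Omega)$ bound (this is exactly what produces the exponent $\gamma_1$), so that the whole right-hand side is dominated by a power at most $1$ of the $L^1(0,T)$ quantity $\int_\Omega|\nabla(1+|w|)^\alpha|^2$. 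This is a bookkeeping repair rather than a missing idea --- you already hold all the ingredients --- but it is precisely the point you flagged as requiring the most care, and your stated closing of it would fail.
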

	
	Notice that we do not use the information $w\in L^2(Q_T)$ here but $w\in L^\infty(0,T; L^1(\Omega))$. That is used in \cite{PierreR} and leads to the exponent $\beta< \frac 4 3$.
	
	\begin{proof}
		The proof is based on manipulating nonlinear quantities and Sobolev imbeddings.  We divide it in several steps.
		\\
		\\
		{\bf Some  $L^2$ regularity of $\nabla w$.}
		Multiplying the equation of $w$ in \eqref{pb2} by $\frac{w}{(1+|w|^\frac{1}{\mu})^\mu}$ and integrating on $\Omega$, we obtain three terms which we estimate separately.
		\\
		\\
		We begin with the Laplacian term. Recalling the membrane conditions and applying the Leibniz rule and the divergence theorem, arguing by a regularization and a limit technique, we gain, since $\frac{w}{(1+|w|^\frac{1}{\mu})^\mu}$ is an increasing function,
		\begin{align*}
		\int_{\Omega} \frac{w}{(1+|w|^\frac{1}{\mu})^{\mu}}\; \Delta w\;\nonumber
		&=\;\int_{\Gamma}\frac{w^1}{(1+|w^1|^{\frac{1}{\mu}})^{\mu}} \partial_{n_1} w^1\;+\;\int_{\Gamma}\frac{w^2}{(1+|w^2|^{\frac{1}{\mu}})^{\mu}} \partial_{n_2} w^2-\;\int_{\Omega} \frac{|\nabla w|^2}{(1+|w|^{\frac{1}{\mu}})^{\mu+1}}\nonumber
		\\
		&=\int_{\Gamma}\left(\frac{w^1}{(1+|w^1|{^\frac{1}{\mu})^{\mu}}} - \frac{w^2}{(1+|w^2|{^\frac{1}{\mu}})^{\mu}}\right) k(w^2-w^1)-\;\int_{\Omega} \frac{|\nabla w|^2}{(1+|w|^{\frac{1}{\mu}})^{\mu+1}}
		\\
		&\leq - \int_{\Omega} \frac{|\nabla w|^2}{(1+|w|^{\frac{1}{\mu}})^{\mu+1}}.
		\end{align*}		
		
		We analyse now the reaction term. We remark that $0\leq \frac{w}{(1+|w|^\frac{1}{\mu})^\mu}\leq 1$ and, using that $f\in L^1(Q_T)$, we conclude 
		$$
		\int_{\Omega} \left|\frac{w}{(1+|w|^\frac{1}{\mu})^{\mu}}\;f\right| \leq \int_{\Omega} |f| = \|f\|_{L^1(\Omega)}.
		$$
		
		Next, for the time derivative, we define  the anti-derivative $0 \leq \psi_\mu (w)= \int_0^w \frac{v\, dv}{(1+|v| ^\frac{1}{\mu})^{\mu}} \leq w$, then 
		$$
		\frac{w}{(1+|w|^\frac{1}{\mu})^{\mu}}\;\partial_t w\;=:\; \partial_t \psi_{\mu}(w).
		$$
		Therefore, combining the previous equality and inequalities,  we find
		$$
		\int_{\Omega} \partial_t \psi_{\mu}(w)\;+\;D\int_{\Omega}  \frac{|\nabla w|^2}{(1+|w|^{\frac{1}{\mu}})^{\mu+1}}\leq \|f\|_{L^1(\Omega)}.
		$$
		At this point, we can integrate in time and  obtain
		$$
		D\int_{Q_T}  \frac{|\nabla w|^2}{(1+|w|^{\frac{1}{\mu}})^{\mu+1}} \leq \int_{\Omega} \psi_{\mu}\big(w_0(x)\big) + \|f\|_{L^1(Q_T)} \leq 
		\|w_0\|_{L^1(\Omega)} + \|f\|_{L^1(Q_T)}.
		$$
		Since, for all $\mu > 1$ there is a $C_\mu$ such that 
		$$
		(1+|w|^{\frac{1}{\mu}})^{\mu+1} \leq C_\mu (1+|w|)^{2(1-\alpha)}, \qquad  \alpha= \frac{1}{2}\left(1-\frac{1}{\mu}\right),
		$$
		we conclude that 
		$$
		\int_{Q_T} (1+|w|)^{2(\alpha -1)} |\nabla w|^2  \leq \frac{C_\mu}{D}\left[ \|w_0\|_{L^1(\Omega)} + \|f\|_{L^1(Q_T)}\right] , \qquad 0 < \alpha < \frac 12.
		$$
		And thus, there is a constant $C_\alpha$ which also depends on $\|w_0\|_{L^1(\Omega)} + \|f\|_{L^1(Q_T)}$ such that 
		\begin{equation}\label{phi1}
		\int_{Q_T} |\nabla (1+|w|)^\alpha |^2  \leq C_\alpha ,  \qquad 0 < \alpha < \frac 12.
		\end{equation}
		%%%%%%%%%%%%%%%%		  
		{\bf Integrability of $w$.}
		%------------------------------------
		The Sobolev imbedding (see Appendix~\ref{appendix_inequal} ) gives
		\begin{equation}\label{phi2}
		\left(\int_\Omega (1+|w|)^{\alpha 2^*} \right)^{\frac{2}{2^*}} \leq C\int_{\Omega} |\nabla (1+|w|)^\alpha |^2, \quad \qquad  {2^*}= \frac {2d}{d-2}.
		\end{equation}
		which is only useful when $\alpha 2^* >1$, i.e. $ \frac{d-2}{2d} <\alpha$. Then, we can interpolate between  $L^1$ and $L^{\alpha 2^*}$ and find
		$$
		\left(\int_\Omega (1+|w|)^{\gamma} \right)^{\frac{1}{\gamma}} \leq C \left(\int_\Omega (1+|w|)\right)^{\theta}  \left( \int_{\Omega} |\nabla (1+|w|)^\alpha |^2\right)^{\frac{1-\theta}{2\alpha}}, \qquad \frac 1 \gamma = \theta +\frac{1-\theta}{\alpha 2^*} .
		$$
		We may choose $\frac{1-\theta}{2\alpha}=1$,   and, recalling that $\alpha < \frac 12$, we find the integrability 
		\begin{equation*}\label{gamma1}
		w\in L^1\big(0,T; L^{\gamma_1}(\Omega)\big) \qquad \text{with} \qquad {\gamma_1} = \frac{d}{2\big( d(1-\alpha) -1\big) } < \frac{d}{d-2} .
		\end{equation*}
		We may also choose $\frac{\gamma (1-\theta)}{2\alpha}=1$, $\alpha < \frac 12$ and find the integrability
		\begin{equation*}\label{gamma2}
		w\in L^{\gamma_2}(Q_T)  \qquad \text{with} \qquad {\gamma_2} = \frac{2\big(1+\alpha d\big)}{d} < \frac{2+d}{d} .
		\end{equation*}
		\\
		%%%%%%%%%%%%%%%%
		{\bf Regularity of $\nabla w$.}
		%--------------------------------------
		On the other hand, Hölder inequality gives 
		\[  \begin{array}{rl}
		\displaystyle   \int_{\Omega} |\nabla w|^\beta  =\int_{\Omega} \frac{|\nabla w|^\beta}{(1+|w|)^\eta}(1+|w|)^\eta
		&  \displaystyle \leq  \left(\int_{\Omega} \frac{|\nabla w|^{\beta r}}{(1+|w|)^{\eta r}}\right)^\frac{1}{r}\left(\int_{\Omega}(1+|w|)^{\eta p}\right)^\frac{1}{p} 
		\\[15pt]
		&  \displaystyle\leq C\left( \int_{\Omega}   |\nabla  (1+|w|)^\alpha  |^2  \right)^{\frac 1 r} \left(\int_{\Omega}(1+|w|)^{\eta p}\right)^\frac{1}{p} 
		\end{array}  \]
		with
		$$
		\frac{1}{r}+\frac{1}{p}=1, \qquad   \beta=\frac{2}{r} \leq 2, \quad \eta r=2 (1-\alpha). %\quad \mbox{ and } \quad \eta p =2^* \alpha ,
		$$
		
		We can choose $\eta p = \gamma_1$ from above, which requires $\eta \left( \frac{1}{2(1-\alpha)} + \frac 1{\gamma_1} \right)= 1$, $\beta = \frac \eta{1-\alpha}= \frac{2 \gamma_1}{\gamma_1+2(1-\alpha)}$
		and we find, thanks to the estimate \eqref{phi1}, 
		\[
		%begin{equation}\label{gradw}
		\int_{\Omega} |\nabla w|^\beta  \in L^1(0,T)   \qquad \text{with} \qquad   \beta <  \frac d {d-1}.
		\]
		%end{equation}
		This concludes the proof of the  gradient estimate. Moreover, considering that $\beta<\gamma_2$, thanks to Sobolev imbeddings, we can infer that $w \in L^\beta(0,T;L^\beta(\Omega))$. 
		\\
		\\
		{\bf The trace.}  The regularity of the trace derives from its continuity property \cite{brezis} (p.$315$), i.e. 
		\begin{equation}
		\int_0^T\|\mbox{Tr}(w)\|^\beta_{W^{1-\frac{1}{\beta},\beta}(\Gamma)}\leq \int_0^T\|w\|^\beta_{W^{1,\beta}(\Omega)}, \quad 1\leq \beta<\frac{d}{d-1}.
		\end{equation}

		%----------------------------------------
		\section{Compactness}
		\label{appendix_compact}
		%----------------------------------------
		In order to  conclude the proof of Lemma~\ref{compreglemma}, it remains to adapt compactness arguments to the case of the membrane problem. A proof based  on  a dual approach, see \cite{baras, bothe}, could be used. We rather go to a direct proof. \\ [1ex]
		
		\noindent{\bf Compactness  in space.}   It  can be obtained using the Rellich-Kondrachov theorem \cite{adams}, since we know the approximate family is bounded in the spaces $W^{1,\beta}(\Omega^\lambda)$, $\lambda=1, \; 2$ which are compactly embedded in $L^{\gamma_1}(\Omega^\lambda)$, with $\gamma_1<\frac{d}{d-2}$.
		\\ [1ex]
		
		\noindent{\bf Compactness  in time.} We use  the  Fréchet-Kolmogorov criteria, see~\cite{brezis} for instance. Let $\varphi(x)$ be a nonnegative, radially symmetric,  $C^\infty_c(\R^d)$ standard mollifier with mass $1$. We define the family $(\varphi_\delta)_{\delta>0}$ by
		\begin{equation} \label{mollif3}
		\varphi_\delta(x)=\frac{1}{\delta^d}\,\varphi\left(\frac{x}{\delta}\right), \qquad \|\varphi_\delta\|_{L^1(\Omega)}=1.
		\end{equation}
		Moreover, we have
		\begin{equation}\label{mollif2}
		\| g\ast \varphi_\delta \|_{L^p(\Omega)}\leq \| \varphi_\delta \|_{L^1(\Omega)}\|g\|_{L^p(\Omega)}     ,   	
		\end{equation}
		and it holds (\cite{evans}, p.$273$) that for any function $g\in W^{1,p}(\Omega)$,
		\begin{equation}\label{mollif1}
		\| g\ast \varphi_\delta -g\|_{L^p(\Omega)}\leq \delta \|\nabla g\|_{L^p(\Omega)}.
		\end{equation}
		About the derivative of order $k$ of $\varphi_\delta$, we know that
		\begin{equation}\label{mollif4}
		\nabla^k \varphi_\delta(x)=\frac{1}{\delta^{d+k}}\nabla^k \varphi\left(\frac{x}{\delta}\right), \qquad \|\nabla^k \varphi_\delta\|_{L^1(\Omega)}\leq \frac{C}{\delta^{k}}.
		\end{equation}

		\begin{proof}
			To complete the proof of time compactness, we shall prove that, as $h \rightarrow 0$, 
			\begin{equation}\label{eq:Kolm1}
			\ithx |w(t+h,x)-w(t,x)|dx dt \rightarrow 0.
			\end{equation}
			
			By comparison with the mollified versions, the triangular equality yields
			\begin{align*}%\label{timecmp}
			\ithx |w(t+h,x)-w(t,x)|dx dt &\leq \ithx |w(t,x)- w(t,\cdot)\ast\varphi_\delta(x)|dx dt\nonumber \\ 
			&+ \ithx |w(t+h,x)- w(t+h,\cdot)\ast\varphi_\delta(x)|dx dt \nonumber \\ &+\ithx |w(t+h,\cdot)\ast\varphi_\delta(x)-w(t,\cdot)\ast\varphi_\delta(x)|dx dt
			\end{align*} 
			Here, $\delta$ depends on $h$ (to be specified later on) and converges to zero.
			It suffices to prove that each integral converges to zero as $h\rightarrow 0$.
			\\[1ex]
			
			\noindent{\it First term.}
			We analyse the first term in the right-hand side. It holds that
			\begin{equation}\label{eq:Kolm2}
			\ithx |w(t,x)- w(t,\cdot)\ast\varphi_\delta(x)|dx dt\leq \delta \int_0^{T-h} \|\nabla w(t,x)\|_{L^1(\Omega)} dt \leq C\delta(h),
			\end{equation}
			thanks to $w$ regularity and to \eqref{mollif1}, which proves that it converges to zero as $h\rightarrow 0$.\\[1ex]	
			{\it Second term.}
			For the second integral, we can proceed as for the fist one obtaining
			\begin{equation}\label{eq:Kolm3}
			\ithx |w(t+h,x)- w(t+h,\cdot)\ast\varphi_\delta(x)|dx dt \leq C \delta(h).
			\end{equation}
			
			\noindent{\it Third term.} 
			Remembering \eqref{pb2}, the last term can be written as
			\begin{equation*}
			\ithx |w(t+h,\cdot)\ast\varphi_\delta(x)-w(t,\cdot)\ast\varphi_\delta(x)|dx\, dt=\ithx \left|\int_{t}^{t+h} \frac{\partial w}{\partial s}(s,x)\ast \varphi_\delta(x) ds\right| dx\,dt
			\end{equation*}
			\begin{equation*}
			=\ithx \left|\int_{t}^{t+h} \left[D\Delta w + f\right]\ast \varphi_\delta ds\right| dx\,dt = \ithx \left|\int_{t}^{t+h} D w \ast \Delta\varphi_\delta+ f\ast \varphi_\delta\, ds \right| dx\,dt 
			\end{equation*}
			after exchanging derivatives in the convolution.
			From \eqref{mollif2} we deduce
			\begin{align*}
			\ithx |w(t+h,\cdot)\ast\varphi_\delta(x)-w(t,\cdot)\ast\varphi_\delta(x)|dx\, dt &\leq \int_{0}^{T-h}\int_t^{t+h} D\|w\|_{L^1(\Omega)} \|\Delta \varphi_\delta\|_{L^1(\Omega)}\nonumber\\
			&+ \int_{0}^{T-h}\int_t^{t+h}\|f\|_{L^1(\Omega)} \|\varphi_\delta\|_{L^1(\Omega)}.
			\end{align*}
			Finally, thanks to \eqref{mollif3} and \eqref{mollif4}, we obtain choosing $\delta=h^{1/4}$  
			\[
			\ithx |w(t+h,\cdot)\ast\varphi_\delta(x)-w(t,\cdot)\ast\varphi_\delta(x)|dx\, dt \leq C[ \frac{h}{\delta^2} +h]  \leq C \sqrt h 
			\]	
			and  \eqref{eq:Kolm1} follows combining this estimate with \eqref{eq:Kolm2} and \eqref{eq:Kolm3}.
		\end{proof}
		
		Applying the Fréchet-Kolmogorov theorem \cite{brezis}, we conclude that the set  of functions $w \in L^1(Q_T)$ under consideration is compact in  $L^1(Q_T)$.  Consequently, we claim compactness in $L^1\big(0,T; L^{\gamma_1}(\Omega)\big)$ with ${\gamma_1}< \frac{d}{d-2}$ and in $L^{\gamma_2}(Q_T)$ with ${\gamma_2}< \frac{2+d}{d} $.
		In fact, since we have $L^1$-convergence of $L^p$-functions, we deduce convergence in the space $L^q$, for $q<p$.
		\\
		
		\noindent{\bf Compactness of traces in $L^\beta\big(0,T;L^\beta(\Gamma)\big)$.} Space compactness can be deduced, in each $\Omega^\lambda$,  from trace continuity and a compactness result for the boundary (\cite{demengel}, p.167) such that $W^{1-\frac{1}{\beta},\beta}(\Gamma) \subset\subset L^{\beta}(\Gamma).$ Time compactness is again achieved through the Fréchet-Kolmogorov theorem. Following the same proof as before and changing the order of the time integrals, we need to recall Kedem-Katchalsky membrane conditions from which we can infer that $\pt Tr_\Gamma (w) \in L^1(0,T;L^1(\Gamma))$ and so we can conclude the proof.

	\end{proof}
	%%%%%%%%%%%%%%%%%%%%%%%%%%%%%%%%%%%%%%%%%%%%%%%%%%%%	
	\section{Sobolev and Poincaré inequalities with membrane}
	\label{appendix_inequal}
	%-------------------------------------------------------------------------------------------
	
	For completeness, we explain why  the Sobolev embeddings  can be extended to the  membrane problem, leading to \eqref{phi1} and \eqref{phi2}. More precisely, we explain how to arrive to 
	$$ \| \phi_{\alpha}(w^1)\|^2_{L^{2^*}(\Omega^1)} + \| \phi_{\alpha}(w^2)\|^2_{L^{2^*}(\Omega^2)}\;  \leq C \left( \|\nabla \phi_{\alpha}(w^1)\|^2_{L^2(\Omega^1)} + \|\nabla \phi_{\alpha}(w^2)\|^2_{L^2(\Omega^2)} \right).$$
	There are two difficulties. First, the boundary condition is not Dirichlet everywhere. Second we are dealing with a singular domain $\Omega$ and so we cannot use directly  the Sobolev or Poincaré inequalities in $\Omega$, but only some easy generalizations that we explain now.
	\\
	
	We are going to prove the	
	\begin{thm}[Gagliardo-Nirenberg-Sobolev inequality  with membrane]\label{problem}
		We consider the bounded domain $\Omega = \Omega^1\, \cup \,\Omega^2 \subset \R^d, \; d\geq 2$, with piecewise $C^1$ sub-domains $\Omega^1$ and $\Omega^2$ and a $C^1$ membrane $\Gamma=\partial\Omega^1\, \cap \,\partial\Omega^2$ which decomposes $\Omega$ in the two parts. We take the function $v=(v^1,v^2) \in {\bf H^1}$ (see Definition \ref{defh1}), then, for $\lambda=1,2$, 
		\begin{equation}\label{pb0}
		\|v^\lambda\|_{L^{2^*}(\Omega^\lambda)}\;\leq\;C(\Omega^\lambda)\;\;\|\nabla v^\lambda\|_{{L^{2}(\Omega^\lambda)}^d},
		\end{equation}
		and consequently 
		\begin{equation}\label{pb1}
		[\;\|v^1\|_{L^{2^*}(\Omega^1)}\;+\;\|v^2\|_{L^{2^*}(\Omega^2)}\;]\;\leq\;C(\Omega^1,\Omega^2)\;[\;\|\nabla v^1\|_{{L^{2}(\Omega^1)}^d}\;+\;\|\nabla v^2\|_{{L^{2}(\Omega^2)}^d}\;].
		\end{equation}
	\end{thm}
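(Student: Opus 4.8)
The plan is to reduce everything to the single-domain estimate \eqref{pb0} for each $\lambda=1,2$ separately, since once \eqref{pb0} holds the combined statement \eqref{pb1} follows at once by adding the two inequalities. Fix $\lambda$. The only genuine difficulty is that $v^\lambda$ satisfies a homogeneous Dirichlet condition merely on the portion $\Gamma^\lambda=\partial\Omega^\lambda\setminus\Gamma$ and not on all of $\partial\Omega^\lambda$, so the sharp Sobolev inequality for $H^1_0$ functions is not directly available; the gradient-only right-hand side of \eqref{pb0} must instead be recovered by exploiting the partial Dirichlet datum through a Poincar\'e inequality.

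First I would invoke the classical Sobolev embedding on the subdomain alone. Since $\Omega^\lambda$ is bounded and piecewise $C^1$, it is a Lipschitz domain and hence enjoys the $H^1$-extension property; the Gagliardo--Nirenberg--Sobolev embedding $H^1(\Omega^\lambda)\hookrightarrow L^{2^*}(\Omega^\lambda)$, with $2^*=\frac{2d}{d-2}$ (for $d=2$ one replaces $L^{2^*}$ by any $L^p$, $p<\infty$, the scheme being unchanged), then yields a constant $C_S=C_S(\Omega^\lambda)$ with
\[ \|v^\lambda\|_{L^{2^*}(\Omega^\lambda)}\le C_S\,\nh{v^\lambda}{\Omega^\lambda}=C_S\left(\nl{v^\lambda}{\Omega^\lambda}^2+\nlnab{v^\lambda}{\Omega^\lambda}^2\right)^{1/2}. \]
At this stage the right-hand side still carries the full $H^1$ norm, and the zero-order term has to be absorbed.

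The second step uses the Poincar\'e inequality valid for functions vanishing on a boundary subset of positive $(d-1)$-dimensional measure, i.e.\ exactly the inequality already employed to prove coercivity of $B$ (Theorem~\ref{poin}): because $\Gamma^\lambda$ is non-empty and $v^\lambda=0$ on $\Gamma^\lambda$ in the trace sense, there is $C_P=C_P(\Omega^\lambda)$ with $\nl{v^\lambda}{\Omega^\lambda}\le C_P\,\nlnab{v^\lambda}{\Omega^\lambda}$. If a self-contained argument is preferred, this follows by the standard compactness--contradiction scheme: a sequence $v_n$ with $\nl{v_n}{\Omega^\lambda}=1$ and $\nlnab{v_n}{\Omega^\lambda}\to 0$ is bounded in $H^1(\Omega^\lambda)$ and, by Rellich--Kondrachov on the Lipschitz domain $\Omega^\lambda$, converges strongly in $L^2$ to a constant (the gradient of the limit vanishing), which is nonzero by normalisation yet must vanish on $\Gamma^\lambda$ by continuity of the trace, a contradiction. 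Inserting this bound into the previous display gives $\nh{v^\lambda}{\Omega^\lambda}^2\le(1+C_P^2)\,\nlnab{v^\lambda}{\Omega^\lambda}^2$, whence \eqref{pb0} holds with $C(\Omega^\lambda)=C_S(1+C_P^2)^{1/2}$; summing over $\lambda=1,2$ produces \eqref{pb1}.

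The step I expect to require the most care is the Poincar\'e inequality, and specifically its validity on the geometrically singular pieces $\Omega^\lambda$. Two points must be verified: that $\Omega^\lambda$ is regular enough (piecewise $C^1$, hence Lipschitz) for both the extension/embedding and the Rellich compactness to hold up to the edge where the membrane $\Gamma$ meets $\Gamma^\lambda$; and that the trace onto $\Gamma^\lambda$ is well defined and continuous, so that the limiting constant genuinely inherits the boundary value zero. It is worth emphasising that no information about the membrane transmission condition enters the argument: each estimate is proved on one side independently and the coupling across $\Gamma$ plays no role, which is precisely why it suffices to treat $\Omega^1$ and $\Omega^2$ one at a time and add the results.
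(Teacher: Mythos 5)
Your overall strategy --- Sobolev embedding $H^1(\Omega^\lambda)\hookrightarrow L^{2^*}(\Omega^\lambda)$ to control the $L^{2^*}$ norm by the full $H^1$ norm, then the Poincar\'e inequality for functions vanishing on the boundary portion $\Gamma^\lambda$ of positive surface measure to absorb the zero-order term, then summation over $\lambda$ --- is exactly the skeleton of the paper's argument (Theorems~\ref{h1} and~\ref{poin}), and your compactness--contradiction proof of the Poincar\'e step coincides with the paper's proof of Theorem~\ref{poin}. Where you genuinely diverge is on the one point the paper is actually worried about: $\Omega^\lambda$ is only piecewise $C^1$, while the extension theorem, the Sobolev embedding and Rellich--Kondrachov are cited in the paper for $C^1$ domains. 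You resolve this by asserting ``piecewise $C^1$, hence Lipschitz'' and invoking the extension property directly on $\Omega^\lambda$; the paper instead exploits the Dirichlet datum on $\Gamma^\lambda$: it prolongs $\Gamma$ to a hypersurface splitting $\R^d$ into $P^1,P^2$, extends $v^\lambda$ by zero across $\Gamma^\lambda$ into a genuinely $C^1$ domain $Q^\lambda$ with $\Omega^\lambda\subset Q^\lambda\subset P^\lambda$, applies Theorems~\ref{h1} and~\ref{poin} on $Q^\lambda$, and restricts back to $\Omega^\lambda$ (the gradient norm is unchanged since the extension vanishes outside $\Omega^\lambda$). Your route is shorter and perfectly valid whenever $\Omega^\lambda$ is indeed Lipschitz, but be aware that ``piecewise $C^1$'' alone does not guarantee this: if $\Gamma$ met $\Gamma^\lambda$ tangentially the corner would be a cusp and the Lipschitz extension could fail. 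The paper's zero-extension device is designed precisely to avoid ever needing an extension operator on the singular domain $\Omega^\lambda$, using only the fact that an $H^1$ function with zero trace on $\Gamma^\lambda$ extends by zero to an $H^1$ function on a smoother super-domain; you should either add the non-tangency hypothesis explicitly or adopt that device. Your closing observation that the membrane transmission condition plays no role is correct and consistent with the statement, which only assumes $v\in{\bf H^1}$.
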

	The reason why we want to prove this theorem is that the domain $\Omega$ described above is not enough regular to use the usual Gagliardo-Nirenberg-Sobolev inequality (\cite{brezis}, p.284). Consequently, we need to build  smoother domains containing each $\Omega^\lambda$, $\lambda=1,2$, in which we can apply known results and then, with a restriction to $\Omega$, we can find \eqref{pb0} and \eqref{pb1}. The construction is made considering an extension of $\Gamma$ and a domain with the same internal structure as $\Omega$ such that it contains $\Omega$ and each extension of the $\Omega^\lambda$ is of class $C^1$.

	We first recall the standard Sobolev inequality (\cite{brezis}, p.$284$) in a bounded open set.
	\begin{thm}[Sobolev embedding]\label{h1}
		Let $Q$ be a bounded  open subset of class $C^1$ in $\R^d$. There is  a constant   $  C_Q$  such that for all $v\in H^1(Q),$ we have
		$$v\in L^{2^*}(Q) \quad\mbox{ and }\quad \|v\|_{L^{2^*}(Q)}\;\leq\;C_Q\;\left[\;\|v\|_{L^{2}(Q)}\;+\;\|\nabla v\|_{{L^{2}(Q)}^d}\;\right].$$
	\end{thm}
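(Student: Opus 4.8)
The plan is to reduce the statement on the bounded $C^1$ domain $Q$ to the Gagliardo--Nirenberg--Sobolev inequality on the whole space $\R^d$ by means of a bounded extension operator, the $C^1$ regularity of $\partial Q$ being used exactly at that transfer step. I would first establish the whole-space inequality for smooth compactly supported functions, then carry it over to $Q$.

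\emph{Step 1: the inequality on $\R^d$ for $v\in C^1_c(\R^d)$.} For each coordinate direction $i$ one writes $|v(x)|\le \int_{\R}|\partial_i v(x_1,\dots,s,\dots,x_d)|\,ds$, whence $|v(x)|^{d/(d-1)}\le \prod_{i=1}^d\bigl(\int_{\R}|\partial_i v|\,ds\bigr)^{1/(d-1)}$. Integrating successively in $x_1,\dots,x_d$ and applying the generalised H\"older inequality at each stage (the Loomis--Whitney argument), I obtain $\|v\|_{L^{d/(d-1)}(\R^d)}\le \prod_{i=1}^d\|\partial_i v\|_{L^1(\R^d)}^{1/d}\le \|\nabla v\|_{L^1(\R^d)}$.

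\emph{Step 2: raising the exponent to $2^*$.} I would apply the $L^1$ estimate of Step~1 to $|v|^{\gamma}$ with $\gamma$ chosen so that $\gamma\,\tfrac{d}{d-1}=2^*=\tfrac{2d}{d-2}$, that is $\gamma=\tfrac{2(d-1)}{d-2}$, which also gives $2(\gamma-1)=2^*$. Controlling $\|\nabla|v|^{\gamma}\|_{L^1}\le \gamma\,\||v|^{\gamma-1}\|_{L^2}\|\nabla v\|_{L^2}$ by Cauchy--Schwarz and dividing by the common factor $\|v\|_{L^{2^*}}^{\gamma-1}$, I reach $\|v\|_{L^{2^*}(\R^d)}\le C\,\|\nabla v\|_{L^2(\R^d)}$ for $v\in C^1_c(\R^d)$ (for $d\ge 3$; when $d=2$ the same scheme yields $H^1\hookrightarrow L^{q}$ for every finite $q$, which is what the applications require, since $2^*$ is then singular). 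By density of $C^\infty_c(\R^d)$ in $H^1(\R^d)$ together with Fatou's lemma, the inequality extends to all $v\in H^1(\R^d)$.

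\emph{Step 3: transfer to $Q$, and the main difficulty.} Since $Q$ is bounded with boundary of class $C^1$, there is a bounded linear extension operator $E\colon H^1(Q)\to H^1(\R^d)$ with $Ev$ compactly supported and $\|Ev\|_{H^1(\R^d)}\le C_Q\|v\|_{H^1(Q)}$; applying Step~2 to $Ev$ and restricting to $Q$ gives $\|v\|_{L^{2^*}(Q)}\le \|Ev\|_{L^{2^*}(\R^d)}\le C\|\nabla(Ev)\|_{L^2(\R^d)}\le C\|Ev\|_{H^1(\R^d)}\le C_Q\bigl[\|v\|_{L^2(Q)}+\|\nabla v\|_{L^2(Q)}\bigr]$, which is the asserted estimate. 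The main obstacle is precisely the construction of $E$: the whole-space inequality is self-contained, whereas bounding the extension in the $H^1$-norm requires local flattening of $\partial Q$ by $C^1$ charts, reflection across the flattened boundary, and a partition of unity, and it is here that the $C^1$ hypothesis on $\partial Q$ is genuinely used.
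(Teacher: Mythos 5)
Your proposal is correct and takes essentially the same route as the paper: both arguments hinge on extending $v$ from the bounded $C^1$ domain $Q$ to $\R^d$ by a bounded linear extension operator $H^1(Q)\to H^1(\R^d)$ (with compactly supported, i.e.\ $H^1_0$-type, image so that the homogeneous inequality $\|w\|_{L^{2^*}(\R^d)}\leq C\,\|\nabla w\|_{L^2(\R^d)}$ applies), then restricting back to $Q$. The only difference is cosmetic: you prove the whole-space Gagliardo--Nirenberg--Sobolev inequality from scratch (Loomis--Whitney argument plus the $|v|^{\gamma}$ bootstrap, with a correct caveat for $d=2$), whereas the paper simply cites it from Evans, so your write-up is more self-contained but structurally identical.
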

	
	\begin{proof}
		We recall how to  prove Theorem~\ref{h1} departing  from the case of the full space. We use the regularity of the domain which assures us the existence of a linear and continuous extension operator $T: H^1(Q) \rightarrow H^1(R^d)$, which is also the extension from $L^2(Q)$ into $L^2(\R^d)$ (\cite{brezis}, p.$272$). So, we obtain that:
		\begin{eqnarray}
		&&\bullet\; \mbox{ taken } v\in H^1(Q), \quad T(v)\in H^1(\R^d) \mbox{ and } T(v)=v \mbox{ on } Q;\label{T1}\\[1ex]
		&&\bullet\; \|T(v)\|^2_{L^2(\R^d)} \leq C^2_{\footnotesize{\mbox{exten}}L^2}(Q)\;\|v\|^2_{L^2(Q)};\label{T2}\\[1ex]
		&&\bullet\; \|\nabla T(v)\|^2_{{L^2(\R^d)}^d} \leq C^2_{\footnotesize{\mbox{exten}}H^1}(Q)\;\|v\|^2_{H^1(Q)}.\label{T3}
		\end{eqnarray}
		Moreover, for construction (see the proof of the extension theorem \cite{brezis}, p.272), this operator is in $H^1_0(R^d)$. Consequently, using a corollary of the Sobolev inequality (\cite{evans}, p.265), we get that
		$$T(v)\in L^{2^*}(\R^d) \mbox{ and } \|T(v)\|_{L^{2^*}(\R^d)}\leq C_{\footnotesize{\mbox{sob}}}(d,2)\; \|\nabla T(v)\|_{{L^2(\R^d)}^d}.$$
		We proceed with some estimates due to the application of \eqref{T1}, \eqref{T2}, \eqref{T3}.
		First of all, we deduce
		$$\|\nabla v\|^2_{{L^2(Q)}^d}=\|\nabla T(v)\|_{{L^2(Q)}^d}\leq \|\nabla T(v)\|_{{L^2(\R^d)}^d}\leq C_{\footnotesize{\mbox{exten}}H^1}(Q)\;\nh{v}{Q}^2$$$$=C_{\footnotesize{\mbox{exten}}H^1}(Q)\;\left[\nl{v}{Q}+\nlnab{v}{Q}\right].$$
		Since $T(v)\in L^{2^*}(\R^d)$ and $T(v)=v$ on $Q$, we get $v\in L^{2^*}(Q)$ and
		$$\nlstar{v}{*}{Q}^2= \nlstar{T(v)}{*}{Q}^2\leq \nlstar{T(v)}{*}{\R^d}^2\leq (C_{\footnotesize{\mbox{sob}}}(d,2))^2 \;\nlnab{T(v)}{\R^d}^2$$$$\leq(C_{\footnotesize{\mbox{sob}}}(d,2))^2\;C^2_{\footnotesize{\mbox{exten}}H^1}(Q)\;\left[\nl{v}{Q}^2+\nlnab{v}{U}^2\right].$$
		The proof of Theorem~\ref{h1} is complete.
	\end{proof}	
	
	Since we do not impose Dirichlet conditions on the full boundary, we need the following generalized Poincaré inequality (\cite{morrey} p.82).
	\begin{thm}[Poincaré inequality]\label{poin}
			Suppose $Q$ a bounded and connected open subset of $\R^d$ of class $C^1$ and consider a portion of its boundary $\Sigma_0 \subset \partial Q $ such that $|\Sigma_0|>0$. Then, there exists a constant $C(Q,\Sigma_0)$ such that
			\begin{equation}
			\forall v\in H^1(Q) \mbox{ such that } Tr_{\Sigma_0}(v)=0, \quad \nl{v}{Q}^2\leq C(Q,\Sigma_0) \nlnab{v}{Q}^2.
			\end{equation}
	\end{thm}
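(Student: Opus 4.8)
The plan is to argue by contradiction, using a compactness argument of Rellich--Kondrachov type together with the connectedness of $Q$ and the continuity of the trace operator. First I would suppose that the claimed inequality fails for every finite constant. Then there is a sequence $(v_n)_n\subset H^1(Q)$ with $Tr_{\Sigma_0}(v_n)=0$ and $\nl{v_n}{Q}^2 > n\,\nlnab{v_n}{Q}^2$. After normalising by setting $w_n := v_n/\nl{v_n}{Q}$, one obtains functions satisfying $\nl{w_n}{Q}=1$, $Tr_{\Sigma_0}(w_n)=0$ and $\nlnab{w_n}{Q}^2 < 1/n$, so that $\nlnab{w_n}{Q}\to 0$.

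The sequence $(w_n)$ is then bounded in $H^1(Q)$. Since $Q$ is a bounded open set of class $C^1$, the Rellich--Kondrachov theorem gives the compact embedding $H^1(Q)\hookrightarrow L^2(Q)$, so, up to a subsequence, $w_n \to w$ strongly in $L^2(Q)$ and weakly in $H^1(Q)$. Because $\nlnab{w_n}{Q}\to 0$, weak lower semicontinuity of the norm forces $\nabla w=0$; as $Q$ is connected, $w$ is a constant. Moreover $\nl{w}{Q}=\lim_n \nl{w_n}{Q}=1$, so this constant is nonzero. Convergence of the full norms, $\nh{w_n}{Q}^2\to 1=\nh{w}{Q}^2$, upgrades the weak $H^1$-convergence to strong convergence.

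By this strong $H^1$-convergence and the continuity of the trace operator $Tr_{\Sigma_0}:H^1(Q)\to L^2(\Sigma_0)$, one gets $Tr_{\Sigma_0}(w_n)\to Tr_{\Sigma_0}(w)$ in $L^2(\Sigma_0)$; since each $Tr_{\Sigma_0}(w_n)=0$, the limit satisfies $Tr_{\Sigma_0}(w)=0$. But $w$ is a nonzero constant and $|\Sigma_0|>0$, hence $Tr_{\Sigma_0}(w)$ equals that nonzero constant almost everywhere on $\Sigma_0$, a contradiction. Therefore the inequality must hold for some finite $C(Q,\Sigma_0)$.

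The step I expect to be the crux is transferring the vanishing trace condition to the limit: one must ensure the convergence is strong enough in $H^1(Q)$ (or at least that the traces themselves converge) so that $Tr_{\Sigma_0}(w)=0$ is preserved in the limit. This is precisely where the assumption $|\Sigma_0|>0$ and the $C^1$ regularity of $Q$ (which guarantees a well-defined continuous trace on $\partial Q$) are essential; without the positive-measure hypothesis the limiting constant would be unconstrained and no contradiction would arise, which is exactly why a pure Dirichlet-everywhere version of Poincaré cannot be invoked directly here.
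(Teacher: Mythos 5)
Your proof is correct and follows essentially the same route as the paper: contradiction, normalisation, Rellich--Kondrachov compactness, connectedness of $Q$ forcing the weak limit to be constant, and continuity of the trace to rule that constant out. The only cosmetic difference is that the paper proves the slightly stronger inequality $\nl{v}{Q}^2\leq C\big[\nlnab{v}{Q}^2+(\int_{\Sigma_0}|v|\,dS)^2\big]$ and derives the contradiction from $v=0$ versus $\nl{v_n}{Q}=1$, whereas you keep the zero-trace constraint throughout and contradict the nonvanishing of the limiting constant on $\Sigma_0$; your intermediate upgrade to strong $H^1$ convergence is valid but not needed, since weak $H^1$ convergence already gives $Tr_{\Sigma_0}(w)=0$ by weak continuity of the bounded linear trace operator.
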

	\begin{proof}
		If the statement is not true, we can find a sequence ${v_n}$ such that each $v_n\in H^1(Q)$ and
		$$\nl{v_n}{Q}^2>n\;\left[\nlnab{v_n}{Q}^2\;+\;\left(\int_{\Sigma_0}|v_n|dS\right)^2\right].$$
		On account of the homogeneity (normalizing), we may assume that $\nl{v_n}{Q}=1$, for each $n$. So we infer that 
		\begin{equation}\label{ineq}
		n\;\left[\nlnab{v_n}{Q}^2\;+\;\left(\int_{\Sigma_0}|v_n|dS\right)^2\right]<1,
		\end{equation}
		which implies that 
		$$\nlnab{v_n}{Q}^2<\frac{1}{n}.$$
		Therefore, $\nabla v_n \rightarrow 0$ in $L^2(Q)$.
		Moreover, $v_n$ is bounded in $H^1(Q)$, so, up to a sub-sequence, it converges weakly in $H^1(Q)$ to some $v$. So $\nabla v_n \rightharpoonup \nabla v$, that means $\nabla v=0.$ This shows that $v$ is a constant (since $Q$ is connected). For the continuity of the trace operator and \eqref{ineq}, we deduce
		$$0=\lim_{n\rightarrow +\infty}\int_{\Sigma_0} |v_n| dS=\int_{\Gamma_0} |v| dS=|c||\Gamma_0|,$$
		and so $v=0$.\\
		At the same time, thanks to the Rellich-Kondrachov compactness theorem \cite{adams, brezis, evans}, up to a sub-sequence, $v_n$ converges strongly in $L^2(Q)$ to $v=0$. Hence, since $\nl{v_n}{Q}=1$, we arrive to a contradiction.
		
	\end{proof}
	
	At this point we are able to give the proof of Theorem~\ref{problem}.
	\begin{proof}
		We apply Theorems~\ref{h1}~and~\ref{poin}. First of all we consider the extension of $\Gamma$ into the space $\R^d$ such that now $\Gamma$ separates the space into two pieces $P^\lambda$ with $\lambda=1,2$. Since we have Dirichlet boundary conditions on $\Gamma^\lambda$, we can extend the function to zero in the whole $P^\lambda$. So now, considering $Q^\lambda$ a domain of class $C^1$ such that $\Omega^\lambda  \subset Q^\lambda \subset P^\lambda$ and for $\lambda, \sigma=1,2$, $\;Q^\lambda \cap P^\sigma$ is a portion of $\Gamma$, we can apply Theorems~\ref{h1}~and~\ref{poin} to
		$$\tilde{v}^\lambda=\left\{\begin{array}{ll}
		v^\lambda, & \mbox{ in } \Omega^\lambda,   \\
		0, & \mbox{ in } \Gamma^\lambda \cup \{Q^\lambda\setminus \Omega^\lambda\}. 
		\end{array}\right.$$
		This proves Theorem \ref{problem} in $Q^\lambda$ and, so, in $\Omega^\lambda$. 
		
	\end{proof}
	
	%%%%%%%%%%%%%%%%%%%%%%%%%%%%%%%%%%%%%%%%%%%%%%%%%%%%
	\bibliographystyle{acm}%siam
	\bibliography{references}
\end{document}